\tikzstyle{main node}=[draw,circle,inner sep=1,outer sep=2,thick,minimum size=12pt]
\newtheorem{lemma}{Lemma\setcounter{claimcounter}{0}}
\newtheorem{theorem}{Theorem\setcounter{claimcounter}{0}}
\newtheorem{corollary}{Corollary}
\newcounter{claimcounter}
\newtheorem*{claim*}{{\it Claim}}
\newcommand{\BF}[1]{{\boldmath{\bf #1}\unboldmath}}
\newcommand{\B}{\{0,1\}}
\newcommand{\ONE}{\mathbf{1}}
\newcommand{\ZERO}{\mathbf{0}}
\newcommand{\G}{\mathcal{G}}
\newcommand{\cst}{\mathrm{cst}}
\newcommand{\id}{\mathrm{id}}
\title{Isomorphic Boolean networks and dense interaction~graphs}
\author{Aymeric Picard Marchetto\footnote{Laboratoire I3S, UMR CNRS 7271 \& Universit\'e C\^ote d'Azur, France.}
~and Adrien Richard$^{*}$\\[3mm]

}
\begin{document}
%%%%%%%%%%%%%%%%%%%%%%%%%%%%%%%%%%%%%%%%%%%%%%%%%%%%%%%%%%%%%%%%%%%%%%%%%%%%%%%%%%%%%%%%%%%%%%%%%%%%%%%
%%%%%%%%%%%%%%%%%%%%%%%%%%%%%%%%%%%%%%%%%%%%%%%%%%%%%%%%%%%%%%%%%%%%%%%%%%%%%%%%%%%%%%%%%%%%%%%%%%%%%%%

\maketitle

\begin{abstract}
A Boolean network (BN) with $n$ components is a discrete dynamical system described by the successive iterations of a function $f:\B^n\to\B^n$. In most applications, the main parameter is the interaction graph of $f$: the digraph with vertex set $\{1,\dots,n\}$ that contains an arc from $j$ to $i$ if $f_i$ depends on input $j$. What can be said on the set $\G(f)$ of the interaction graphs of the BNs $h$ isomorphic to $f$, that is, such that $h\circ \pi=\pi\circ f$ for some permutation $\pi$ of $\B^n$? It seems that this simple question has never been studied. Here, we report some basic facts. First, if $n\geq 5$ and $f$ is neither the identity or constant, then $\G(f)$ is of size at least two and contains the complete digraph on $n$ vertices, with $n^2$ arcs. Second, for any $n\geq 1$, there are $n$-component BNs $f$ such that every digraph in $\G(f)$ has at least $n^2/9$ arcs.
\end{abstract}

\section{Introduction}

A \emph{Boolean network} (network for short) with $n$ components is a finite dynamical system defined by the successive iterations of a function  
\[
f:\B^n\to\B^n,\quad x=(x_1,\dots,x_n)\mapsto f(x)=(f_1(x),\dots,f_n(x)).
\]
Boolean networks have many applications; in particular, they are omnipresent in the modeling of neural and gene networks (see \cite{N15} for a review).

The ``network'' terminology comes from the fact that the \textit{interaction graph} $G(f)$ of $f$ is often considered as the main parameter of $f$: the vertex set is $[n]=\{1,\dots,n\}$ and there is an arc from $j$ to $i$ if $f_i$ depends on input $j$, that is, if there are $x,y\in\B^n$ which only differ in $x_j\neq y_j$ such that $f_i(x)\neq f_i(y)$. For instance, in the context of gene networks, the interaction graph is often well approximated while the actual dynamics is not. One is thus faced with the following question: {\em what can be said on the dynamics described by $f$ from $G(f)$ only}. There are many results in this direction (see \cite{G20} for a review). In most cases, the studied dynamical properties are invariant by isomorphism: number of fixed or periodic points, number and size of limit cycles, transient length and so on. However, the interaction graph is {\em not} invariant by isomorphism: even if $f$ and $h$ are isomorphic, their interaction graphs can be very different (by Theorem~1 below, $G(f)$ can have $n^2$ arcs while $G(h)$ has a single arc). Surprisingly, this variation seems to have never been studied, and we report here some basic~facts.  

Given a network $f$, let $\G(f)$ be the set of interaction graphs $G(h)$ such that $h$ is a network isomorphic to $f$, that is, such that $h\circ\pi=\pi\circ f$ for some permutation $\pi$ of $\B^n$. Hence, we propose to study $\G(f)$. For instance, if $f$ is constant, we write this $f=\cst$, then $\G(f)$ contains a single digraph, the digraph on $[n]$ without arcs, and if $f$ is the identity, we write this $f=\id$, then $\G(f)$ also contains a single digraph, the digraph on $[n]$ with $n$ loops (cycles of length one) and no other arcs. 

Our first result shows that, excepted few exceptions (including the two above examples), $\G(f)$ always contains the complete digraph on $[n]$, with $n^2$ arcs, denoted $K_n$. 

\begin{theorem}\label{thm:main1}
$K_n\in\G(f)$ for all network $f\neq\cst,\id$ with $n\geq 5$ components.
\end{theorem}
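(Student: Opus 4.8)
The plan is to translate the statement into a labeling problem and then construct a suitable labeling, treating the case where $f$ is a permutation separately. Since $h\circ\pi=\pi\circ f$ means $h=\pi f\pi^{-1}$, the set $\G(f)$ depends only on the isomorphism type of the functional graph of $f$ (the digraph on $\B^n$ with an arc $x\to f(x)$), so choosing an $h$ isomorphic to $f$ is the same as choosing a bijective \emph{labeling} $\pi$ of $\B^n$. I would first record the elementary reformulation: for such an $h$, the arc $(j,i)$ lies in $G(h)$ if and only if there are states $a,b$ of $f$ with $\pi(a)\oplus\pi(b)=e_j$ and $\pi(f(a))_i\neq\pi(f(b))_i$; reading this through the fibers of $f$, it says that some edge of the cube in direction $j$ (in the $\pi$-coordinates) joins two points lying in distinct $f$-fibers whose images are labeled by strings differing in coordinate $i$. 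Thus $K_n\in\G(f)$ amounts to producing a single $\pi$ that, for every pair $(i,j)$, exhibits such a witnessing cube edge.

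The engine of the argument is a \emph{complementary gadget}. Because $f\neq\cst$, there are states $r_0,r_1$ with $f(r_0)\neq f(r_1)$; I set $\pi(f(r_0))=\ZERO$ and $\pi(f(r_1))=\ONE$. If I can place the two fibers $F_0=f^{-1}(f(r_0))$ and $F_1=f^{-1}(f(r_1))$ so that for \emph{every} direction $j\in[n]$ some point of $\pi(F_0)$ and some point of $\pi(F_1)$ span a $j$-edge of the cube, then along each such edge the images are labeled $\ZERO$ and $\ONE$, which differ in all coordinates, so every arc $(j,i)$ appears simultaneously. A greedy placement achieves this as soon as the fibers are big enough to reach into all $n$ directions, and since $n\geq 5$ leaves $2^n\geq 32$ labels to spare, there is ample room. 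When no single fiber is large I would instead route different directions through different pairs of fibers — choosing image-labels freely and exploiting that $2^n\gg n^2$ — so that across the chosen edges every coordinate $i$ gets separated. This disposes of every non-injective $f$, and in particular of the near-constant networks, for which one singular point already forces all arcs.

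The genuinely hard case, and the one I expect to be the main obstacle, is when $f$ is a \emph{permutation}. Then every fiber is a singleton, the fiber-routing freedom above evaporates, and the condition collapses to the bare requirement that the bijection $h$ have $h_i$ depending on $x_j$ for all $i,j$; the images $h(z),h(z\oplus e_j)$ of a cube edge are then just two prescribed values with no structural slack. Here I would proceed by a tailored construction: since $f\neq\id$, it has a cycle of length at least $2$, and I would first handle the cleanest instance — a single transposition, where labeling the swapped states by a complementary pair $z,\overline z$ makes $h$ agree with the identity except at those two points and already yields complete interaction graph for $n\geq 2$ — and then extend to an arbitrary cycle type by embedding the sensitivity produced on one nontrivial cycle while labeling the remaining cycles so as not to destroy it. The role of $n\geq 5$ is to guarantee enough states for this extension to go through for \emph{every} cycle type; alternatively, one can argue that a uniformly random conjugate of $f$ works, the per-arc failure probability being exponentially small so that a union bound over the $n^2$ arcs succeeds precisely once $n\geq 5$. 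Reconciling these permutation sub-cases into a uniform argument is the delicate part of the proof.
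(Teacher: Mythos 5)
Your reformulation as a labeling problem is correct, and your complementary gadget is essentially the paper's treatment of one case (an \emph{independent set}: the paper's Lemma~\ref{lem:independent_set}). But the claim that this ``disposes of every non-injective $f$'' has a genuine gap: the freedom to ``choose image-labels freely'' does not exist when fibers contain fixed points, because for a fixed point $c$ the image label $\pi(f(c))=\pi(c)$ \emph{is} the placement of the fiber element --- the two choices you treat as independent coincide. Concretely, take $f$ equal to the identity except $f(a)=b$ for one pair $a\neq b$ with $f(b)=b$. This $f$ is non-injective, yet every conjugate $h$ is the identity off the single point $u=\pi(a)$. Two fixed-point fibers can never be paired: complementarity forces $\pi(c)+\pi(c')=\ONE$ while spanning a $j$-edge forces $\pi(c)+\pi(c')=e_j$. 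The one nontrivial fiber $\{a,b\}$ can be complementarily paired with at most one fixed point (injectivity of $\pi$), covering at most one direction. So your gadget, and any routing through fiber pairs, cannot produce $K_n$ here; what does work is placing $\pi(b)=\overline{\pi(a)}$ and using edges from $u$ to neighboring \emph{fixed points}, whose images differ from $\overline{u}$ in all coordinates but one. That is exactly the paper's separate fixed-point construction (Lemma~\ref{lem:fixed_points}), which your framework does not contain. This is why the paper decomposes into three cases --- many fixed points, many limit cycles of length $\geq 3$, or a large independent set (where, and only where, $f(A)\cap A=\emptyset$ decouples fiber placement from image labels) --- and then proves by a counting argument that for $n\geq 5$ one of the three must occur.

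The permutation case has the same problem plus a false fallback. Your transposition computation is right (place the swapped pair antipodally), but the extension ``label the remaining cycles so as not to destroy it'' is precisely where the difficulty lives: for a fixed-point-free involution ($2^{n-1}$ disjoint $2$-cycles) no neighbor of $u$ is fixed, so the sensitivity you created depends on how all the other $2$-cycles are placed, and the paper handles this case only through the independent-set lemma (one point per $2$-cycle gives an independent set of size $2^{n-1}$) together with the nontrivial placement Lemma~\ref{lem:X}. The probabilistic alternative is simply wrong: for a transposition, a uniformly random conjugate has complete interaction graph only when the two swapped points land antipodally, an event of probability $1/(2^n-1)$; the per-arc failure probability is close to $1$, not exponentially small, so no union bound can close the argument. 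In short, your proposal contains a correct germ of one of the paper's three lemmas, but it is missing the other two constructions and the pigeonhole argument that makes the case analysis exhaustive.
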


Our second result shows that, for $n\geq 5$, there are no networks $f$ such that $\G(f)$ only contains the complete digraph. 

\begin{theorem}\label{thm:main2}
$\G(f)\neq\{K_n\}$ for all network $f$ with $n\geq 5$ components.
\end{theorem}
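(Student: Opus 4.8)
The plan is to prove the following slightly stronger and cleaner statement, which makes Theorem~\ref{thm:main1} logically unnecessary here: \emph{every} network $f$ with $n\geq 5$ components admits an isomorphic network $h$ whose interaction graph misses at least one arc. A single such $h$ already forces $\G(f)\neq\{K_n\}$, and phrasing it this way disposes of $f=\cst$ and $f=\id$ uniformly along with everything else. Recall that $h$ is isomorphic to $f$ iff $h=\pi\circ f\circ\pi^{-1}$ for a permutation $\pi$ of $\B^n$, so that $\mathrm{Im}\,h=\pi(\mathrm{Im}\,f)$; equivalently, $h$ is read off $f$'s functional graph (the digraph on $\B^n$ with an arc $x\to f(x)$) after relabelling its vertices by $\pi$. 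To create a missing arc $j\to i$ it suffices to make the output coordinate $h_i$ blind to the input $x_j$, i.e.\ $h_i(x)=h_i(x\oplus e_j)$ for all $x$. I split the argument according to the isomorphism invariant $s=|\mathrm{Im}\,f|$.

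The easy regime is $s\leq 2^{n-1}$. Since $\mathrm{Im}\,h=\pi(\mathrm{Im}\,f)$ has only $s\leq 2^{n-1}$ points and the half-cube $\{x\in\B^n:x_1=0\}$ has exactly $2^{n-1}$, I may choose $\pi$ so that $\pi(\mathrm{Im}\,f)\subseteq\{x_1=0\}$. Then every output of $h$ has first coordinate $0$, so $h_1\equiv 0$ is constant, no arc enters vertex $1$, and $G(h)\neq K_n$. This already settles $\cst$ and, more generally, every $f$ with a small enough image.

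The remaining regime $s>2^{n-1}$, which contains every bijection, is the crux. Now the image cannot be squeezed into a half-cube, so constant coordinates are unavailable, and I instead try to make some $h_i$ equal to a single input, the sparsest nonconstant behaviour. The natural vehicle is a (modified) feedback shift register $h(x)=(x_2,\dots,x_n,g(x))$: whatever the feedback $g$, its first coordinate satisfies $h_1(x)=x_2$, which is blind to input $1$, so such an $h$ never has a complete interaction graph. It then suffices to realize the isomorphism type of $f$ by an $\FSR$ or $\MFSR$ of the appropriate functional graph; for the extreme case of a single $2^n$-cycle this is exactly a de Bruijn sequence, and modified feedback registers should provide the flexibility to reach the remaining large-image types. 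A complementary and elementary tool handles the reducible cases: if the sizes of the weakly connected components of $f$'s functional graph admit a sub-collection summing to $2^{n-1}$, then choosing $\pi$ to send that sub-collection onto $\{x_n=0\}$ and the rest onto $\{x_n=1\}$ makes each half-cube $h$-invariant, so $h_n(x)=x_n$ and every arc $j\to n$ with $j\neq n$ disappears.

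The main obstacle is precisely the overlap of the two failures inside this regime: a function whose functional graph is essentially connected---prototypically a permutation with a cycle longer than $2^{n-1}$---for which neither a constant coordinate ($s>2^{n-1}$) nor a balanced coordinate-preserving split (no union of components has size $2^{n-1}$) is available. For these the real content should be a shift-register realization theorem, guaranteeing that such an isomorphism type can always be put into $\FSR$/$\MFSR$ form, whose interaction graph is sparse by construction. I expect the hypothesis $n\geq 5$ to enter exactly here, supplying the combinatorial slack these realizations need and matching the threshold of Theorem~\ref{thm:main1}.
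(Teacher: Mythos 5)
Your two easy cases are fine: if $|\mathrm{Im}\,f|\leq 2^{n-1}$ you can indeed make $h_1$ constant, and if some union of weakly connected components of $\Gamma(f)$ has size exactly $2^{n-1}$ you can make $h_n(x)=x_n$. But the crux case you identify yourself --- large image and no balanced split of components --- is exactly where your proof stops being a proof: ``modified feedback registers \emph{should} provide the flexibility,'' ``I \emph{expect} the hypothesis $n\geq 5$ to enter here.'' That case is the entire content of the theorem, and it is missing. Worse, the realization theorem you hope for is false for plain feedback shift registers: if $h(x)=(x_2,\dots,x_n,g(x))$, then the only possible preimages of $y$ are $(0,y_1,\dots,y_{n-1})$ and $(1,y_1,\dots,y_{n-1})$, so every vertex of $\Gamma(h)$ has in-degree at most $2$. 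Take $n=5$ and let $\Gamma(f)$ be a fixed point $v$ with three extra preimages, fed by a path through the remaining $28$ configurations: the graph is weakly connected, the image has size $29>2^{4}$, no sub-collection of component sizes equals $2^{4}$, and $v$ has in-degree $4$, so $f$ is isomorphic to no $\FSR$ whatsoever. The undefined ``$\MFSR$'' would thus have to carry the whole argument, and nothing is said about what it is or why it realizes these types.

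For comparison, the paper avoids realization questions entirely. It proves (Lemma~\ref{lem:nice}) that some digraph in $\G(f)$ misses an arc from $j$ to $i$ \emph{if and only if} $f$ has a $(2^{n-2})$-nice set, i.e.\ a set $A\subseteq\B^n$ of size $2^{n-1}$ such that $|f^{-1}(A)|$ and $|f^{-1}(A)\cap A|$ are both even --- a pure parity condition, much weaker than your exact-containment conditions (both of your cases are instances where these parities are trivially even). It then shows by induction (removing two ``equivalent and independent'' elements at a time, starting from $A=\B^n$) that every network with $n\geq 4$ components has $k$-nice sets for all $8\leq k\leq 2^{n-1}$, hence a $(2^{n-2})$-nice set when $n\geq 5$. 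If you want to salvage your approach, the lesson is that you should weaken your goal from ``put $f$ in shift-register form'' to ``find a partition of $\B^n$ whose parities allow a missing arc''; that weakening is precisely the paper's nice-set lemma.
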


From these theorems we deduce the following property, which might seem innocente but that doesn't seem to have a one-line proof: {\em 
If $f$ is a network with $n\geq 5$ components, then $|\G(f)|=1$ if and only if $f=\id$ or~$f=\cst$.}

Even if, for $n\geq 5$, $\G(f)$ cannot only contain the complete digraph, using  a well known isoperimetric inequality in hypercubes, we show that, at least, $\G(f)$ can only contain digraphs with many arcs.

\begin{theorem}\label{thm:main3}
For every $n\geq 1$, there is a network $f$ with $n$ components such that every digraph in $\G(f)$ has at least $n^2/9$ arcs.
\end{theorem}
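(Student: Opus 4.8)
The plan is to translate ``number of arcs'' into a statement about the level sets of the coordinate functions, and then to choose $f$ so that its isomorphism-invariant fiber structure prevents every relabelling from simultaneously making all these level sets simple. Writing $h=\pi\circ f\circ\pi^{-1}$ and $r_i$ for the number of inputs on which $h_i$ genuinely depends, the arc count of $h$ is $\sum_{i=1}^n r_i$. The elementary observation driving everything is that if $h_i$ depends on only $r_i$ inputs, then its $1$-set $h_i^{-1}(1)$ is a \emph{cylinder}: a union of $2^{\,n-r_i}$ subcubes obtained by fixing those $r_i$ coordinates. In particular $2^{\,n-r_i}$ divides $|h_i^{-1}(1)|$, so, writing $\nu_2$ for the $2$-adic valuation,
\[
r_i \;\ge\; n-\nu_2\!\left(|h_i^{-1}(1)|\right),\qquad\text{hence}\qquad \#\mathrm{arcs}(h)\;\ge\; n^2-\sum_{i=1}^n \nu_2\!\left(|h_i^{-1}(1)|\right).
\]
This bound is sharp exactly when the level sets are far from balanced; when some $|h_i^{-1}(1)|$ is close to $2^{n-1}$ the valuation can be as large as $n-1$ and the estimate degenerates, which is where a hypercube isoperimetric inequality will be brought in, to control how many coordinates can have near-balanced, low-complexity level sets at once.

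Next I would isolate what is actually under our control. Since $|h_i^{-1}(1)|=\sum_{v}|f^{-1}(v)|\,[\pi(v)_i=1]$, the only data of $f$ surviving isomorphism is the multiset of fiber sizes $\{|f^{-1}(v)|\}$ together with the abstract dynamics of $f$ on its image; the adversary's permutation $\pi$ is free to send the image of $f$ to \emph{any} equinumerous set of points and to distribute the fiber sizes among those labels arbitrarily. Designing $f$ therefore amounts to choosing a fiber-size multiset summing to $2^n$, and the theorem reduces to the assertion that this multiset can be chosen so that $\sum_i\nu_2\!\left(|h_i^{-1}(1)|\right)\le \tfrac89 n^2$ for every $\pi$.

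The design has to defeat two opposite strategies. If $f$ is a bijection (all fibers singletons), then every $h_i^{-1}(1)$ is forced to have size exactly $2^{n-1}$, all valuations equal $n-1$, and the bound collapses to $\#\mathrm{arcs}(h)\ge n$; so singleton fibers are useless. If instead $f$ has a small image of size $M$, the adversary packs all $M$ labels into a subcube on $\lceil\log_2 M\rceil$ coordinates, leaving the remaining outputs constant and $h$ sparse. The useful regime is intermediate: $M$ large enough that a pigeonhole forces at least $\log_2 M$ outputs to be non-constant, yet with fiber sizes spread so that no assignment can make a level set highly $2$-divisible. The target shape of the argument is to force a constant fraction, about $n/3$, of the outputs to be non-constant, each depending on about $n/3$ inputs, giving $(n/3)^2=n^2/9$ arcs; the isoperimetric inequality is the natural instrument for showing that, for the chosen multiset, no permutation can simultaneously flatten more than this many level sets.

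The main obstacle is precisely this last, uniform step: controlling all $(2^n)!$ relabellings at once. Fiber sizes alone only constrain the level-set sizes through the available subset sums, and a mod-$2$ analysis already shows the adversary can always force every level set to be even; the real difficulty is to stop the \emph{higher} $2$-adic valuations from piling up on the active coordinates, and here the interplay between the explicitly chosen fiber-size multiset and the isoperimetric inequality is what must yield the constant $1/9$. I expect the cleanest route is to fix the fiber multiset explicitly, bound by a counting/isoperimetric estimate the number of coordinates whose level set can be realised as a high-codimension cylinder, and then read off that the surviving outputs contribute at least $n^2/9$ arcs for every $\pi$.
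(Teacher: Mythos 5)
Your two preparatory steps are sound. The inequality $r_i\geq n-\nu_2\bigl(|h_i^{-1}(1)|\bigr)$ is correct, since the $1$-set of a coordinate function with $r_i$ essential variables is a disjoint union of subcubes of dimension $n-r_i$; and the reduction to fiber-size multisets is also correct, because $|h^{-1}(w)|=|f^{-1}(\pi^{-1}(w))|$, so the level-set sizes of $h$ depend only on how the multiset $\{|f^{-1}(v)|\}$ is rearranged over $\B^n$, and every rearrangement is realized by some $\pi$. But at exactly this point the proposal stops being a proof: you never choose the multiset, and the assertion that some multiset forces $\sum_i\nu_2\bigl(|h_i^{-1}(1)|\bigr)\leq\frac{8}{9}n^2$ under \emph{every} rearrangement is precisely the theorem restated in your language. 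You explicitly defer it (``I expect the cleanest route is\dots''), yet all the adversarial content of the statement --- uniformity over all $(2^n)!$ permutations --- lives in that missing step, so what is submitted is a reformulation, not a proof. A symptom that the valuation analysis was never actually run: your claim that ``the adversary can always force every level set to be even'' is false. Take one fiber of size $2$, one empty fiber, and singletons elsewhere; then $|h_i^{-1}(1)|=2^{n-1}+[z_i=1]-[w_i=1]$, where $z\neq w$ are the locations of the doubled and the empty fiber, and any coordinate separating $z$ from $w$ yields an odd level set.

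The gap is fillable, and more cheaply than you predict: your route needs no isoperimetric inequality at all, only a trivial subcube bound. For $n\geq 10$, take the paper's own family $f^A$ but with $|A|=2^k+1$, $k=\lfloor n/2\rfloor$; the multiset is then one fiber of size $2^k+1$, exactly $2^k$ empty fibers, and singletons elsewhere. For any rearrangement, writing $z$ for the big-fiber location, $S$ for the set of empty locations, and $H_i=\{w\in\B^n\mid w_i=1\}$, one computes $|h_i^{-1}(1)|=2^{n-1}-|S\cap H_i|+2^k[z_i=1]$, and a short case check shows $\nu_2\bigl(|h_i^{-1}(1)|\bigr)\leq k$ unless every element of $S$ agrees with $z$ in coordinate $i$. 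The set $I^*$ of coordinates where they all agree satisfies $2^k=|S|\leq 2^{n-|I^*|}$, so $|I^*|\leq n-k$; hence at least $k$ coordinates have $r_i\geq n-k$, and every digraph in $\G(f^A)$ has at least $k(n-k)\geq (n^2-1)/4\geq n^2/9$ arcs (for small $n$, use the identity, as the paper does). Note that this completed version is genuinely different from the paper's argument, which fixes $|A|=\lceil 2^{n/4}\rceil$, shows every isomorphic copy of $f^A$ is a translate of some $f^{A''}$ with $|A''|=|A|$, and lower-bounds the number of actual arcs of $G(f^{A''})$ via Harper's inequality; the valuation route avoids Harper entirely and even improves the constant to roughly $1/4$. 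But none of this is in your submission: as written, it constructs no network and proves nothing about any $\G(f)$.
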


Concerning short term perspectives, we checked by computer that the first theorem hold for $n=3$ (for $n=2$ it fails; see the appendix) and that the second holds for $n=2,3$. For $n=4$ this is much time-consuming and we didn't do it, hoping to find instead dedicated arguments, since those given for $n\geq 5$ do not work. A more involved perspective, related to the third theorem, is to prove that, for any $\epsilon>0$ and $n$ large enough, any digraph in $\G(f)$ has at least $(1-\epsilon)n^2$ arcs for some $f$. We also want to study networks $f$ such that $\G(f)$ is very large and ask: do we have, for any $\epsilon>0$ and $n$ large enough, $|\G(f)|/2^{n^2}>1-\epsilon$ for some $f$?   

The three above theorems are proved the in following three sections. Before going on, we give some basic definitions. An element $x$ of $\B^n$ is a {\em configuration}, and elements in $[n]$ are {\em components}. We set $\ONE(x)=\{i\in [n]\mid x_i=1\}$ and $\ZERO(x)=[n]\setminus \ONE(x)$. The {\em weight} of $x$ is $w(x)=|\ONE(x)|$. We denote by $e_i$ the configuration such that $\ONE(e_i)=\{i\}$. For $x,y\in\B^n$, the sum $x+y$ is applied component-wise modulo two. Hence $x$ and $x+e_i$ only differ in component $i$. We denote by $\ONE$ (resp. $\ZERO$) the configuration of weight $n$ (resp. $0$). Thus $x$ and $x+\ONE$ differ in every component. Given $A\subseteq \B^n$, we set $A+x=\{a+x\mid a\in A\}$. Let $f$ be a $n$-component network. A {\em fixed point} is a configuration $x$ such that $f(x)=x$. Let $\Gamma(f)$ be the digraph with vertex set $\B^n$ and an arc from $x$ to $f(x)$ for every $x\in\B^n$. A {\em limit cycle} of $f$ is a cycle of $\Gamma(f)$. Hence fixed points correspond to limit cycles of length one. An {\em independent set} of $f$ is an independent set of $\Gamma(f)$, equivalently, it is a set $A\subseteq \B^n$ such that $f(A)\cap A=\emptyset$.  
 
%%%%%%%%%%%%%%%%%%%%%%%%%%%%%%%%%%%%%%%%%%%%%%%%%%%%%%%%%%%%%%%%%%%%%%%%%%%%%%%%%%%%%%%%%%%%%%%%%%%%%%%
%%%%%%%%%%%%%%%%%%%%%%%%%%%%%%%%%%%%%%%%%%%%%%%%%%%%%%%%%%%%%%%%%%%%%%%%%%%%%%%%%%%%%%%%%%%%%%%%%%%%%%%
\section{Proof of Theorem \ref{thm:main1}}
%%%%%%%%%%%%%%%%%%%%%%%%%%%%%%%%%%%%%%%%%%%%%%%%%%%%%%%%%%%%%%%%%%%%%%%%%%%%%%%%%%%%%%%%%%%%%%%%%%%%%%%
%%%%%%%%%%%%%%%%%%%%%%%%%%%%%%%%%%%%%%%%%%%%%%%%%%%%%%%%%%%%%%%%%%%%%%%%%%%%%%%%%%%%%%%%%%%%%%%%%%%%%%%

We proceed by showing that, for $n\geq 5$ and $f\neq\cst,\id$, we have $K_n\in\G(f)$ if at least one of the following three conditions holds: $f$ has at least $2n$ fixed points; or $f$ has at least $n$ limit cycles of length $\geq 3$; or $f$ has an independent set of size $\geq 2n$. We then prove that, because $n\geq 5$, at least one of the three conditions holds, and Theorem~\ref{thm:main1} follows.   

\begin{lemma}\label{lem:fixed_points}
Let $f$ be a network with $n$ components, which is not the identity. If $f$ has at least $2n$ fixed points then $K_n\in\G(f)$.  
\end{lemma}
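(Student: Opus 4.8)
The plan is to exploit that isomorphism imposes no constraint beyond relabelling: for \emph{any} permutation $\pi$ of $\B^n$ the network $h=\pi\circ f\circ\pi^{-1}$ is isomorphic to $f$, so it suffices to exhibit one $\pi$ for which $G(h)=K_n$. First I would record the \emph{locality} of the interaction graph: an arc $j\to i$ lies in $G(h)$ as soon as a single hypercube edge witnesses it, so constraining $h$ on further configurations can only create arcs, never destroy them. Hence it is enough to prescribe the values of $h$ on a bounded set of configurations so that every arc of $K_n$ is already witnessed, and then extend $\pi$ to a full permutation in any way whatsoever; the resulting $h$ automatically satisfies $G(h)\supseteq K_n$, that is $G(h)=K_n$.

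For the off-diagonal arcs I would build a single gadget around a chosen configuration $p$. Pick a non-fixed point $w$ of $f$ (one exists since $f\neq\id$) and set $\pi(w)=p$ and $\pi(f(w))=p+\ONE$, so that $p$ is a non-fixed point of $h$ with image the antipode $h(p)=p+\ONE$. Then use fixed points of $f$ to make the $n$ neighbours $p+e_j$ fixed points of $h$. For each direction $j$ the edge $\{p,p+e_j\}$ now has outputs $h(p)=p+\ONE$ and $h(p+e_j)=p+e_j$, which differ in every coordinate $i\neq j$; this forces all off-diagonal arcs $j\to i$ with $i\neq j$ at once.

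It remains to force the $n$ loops $j\to j$, which the antipode gadget never produces. For this I would place a few more fixed points, making $p+e_1+e_k$ a fixed point of $h$ for $k=2,\dots,n$: each pair $\{p+e_1,\,p+e_1+e_k\}$ is then an edge in direction $k$ between two fixed points of $h$, witnessing the loop $k\to k$, while the pair $\{p+e_2,\,p+e_1+e_2\}$ witnesses the loop $1\to 1$. Counting the fixed points consumed, namely the $n$ neighbours $p+e_j$, the $n-1$ points $p+e_1+e_k$, and possibly $p+\ONE$ itself when $f(w)$ happens to be a fixed point, gives at most $2n$; this is exactly where the hypothesis of $2n$ fixed points is spent.

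The main point to get right is this bookkeeping. I would check that all prescribed configurations are pairwise distinct, using that $f\neq\id$ together with at least $2n$ fixed points forces $2^n>2n$, hence $n\geq 3$, which rules out degenerate coincidences such as $p+e_1+e_k=p+\ONE$; that the corresponding domain elements are distinct fixed, respectively non-fixed, points of $f$, so that $\pi$ is well defined on them; and that the two cases, according to whether $f(w)$ is or is not a fixed point, both stay within the budget of $2n$ fixed points (the case $f(w)$ fixed being the tight one). Once this partial assignment is shown to be consistent, extending $\pi$ arbitrarily and invoking the locality observation closes the argument with no further computation.
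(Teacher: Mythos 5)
Your proof is correct and takes essentially the same approach as the paper's: both construct an explicit partial permutation that sends the one guaranteed non-fixed point to a position whose image is its antipode, place fixed points of $f$ on all $n$ neighbours of that position to witness every off-diagonal arc at once, and spend the remaining $n-1$ fixed points on edges joining two fixed points to witness the loops, extending $\pi$ arbitrarily afterwards (the paper's ``let $\pi$ be any permutation such that\dots'' is exactly your locality observation). The only difference is the cosmetic choice of coordinates --- the paper centers the non-fixed point at $e_1+e_2$ with image $e_1+e_2+\ONE$ and gets the loops from the star at $\ZERO$ --- and your placement even avoids the coincidence $e_1+e_2+\ONE=e_3$ that makes the paper's particular assignment non-injective when $n=3$.
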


\begin{proof}
Since $f\neq \id$ we have $f(c)\neq c$ for some $c$, and since $f$ has at least $2n$ fixed points, it has $2n-1$ fixed points distinct from $f(c)$, say $a^0,a^1,\dots,a^n,b^3,\dots,b^n$. Let $\pi$ be any permutation of $\B^n$ such that $\pi(a^0)=\ZERO$, $\pi(a^i)=e_i$ for $1\leq i\leq n$, $\pi(b^i)=e_1+e_2+e_i$ for $3\leq i\leq n$, $\pi(c)=e_1+e_2$ and  $\pi(f(c))=e_1+e_2+\ONE$. Let $h=\pi\circ f\circ \pi^{-1}$. We will prove that $G(h)=K_n$.
For $i\in [n]$, we have $h(\ZERO)=\pi(f(a^0))=\pi(a^0)=\ZERO$ and $h(e_i)=\pi(f(a^i))=\pi(a^i)=e_i$, hence $h(\ZERO)$ and $h(e_i)$ differ in component $i$, and we deduce that $G(h)$ has an arc from $i$ to itself.  
It remains to prove that $G(h)$ has an arc from $i$ to $j$ for distinct $i,j\in [n]$. We have $h(e_1+e_2)=\pi(f(c))=e_1+e_2+\ONE$. Hence $h(e_2)=e_2$ and $h(e_1+e_2)$ differ in every component $j\neq 1$, and thus $G(h)$ has an arc from $1$ to every $j\neq 1$. We prove similarly that $G(h)$ has an arc from $2$ to every $j\neq 2$. For $3\leq i\leq n$, we have $h(e_1+e_2+e_i)=\pi(f(b^i))=\pi(b^i)=e_1+e_2+e_i$, so $h(e_1+e_2+e_i)$ and $h(e_1+e_2)$ differ in every component $j\neq i$, and we deduce that $G(h)$ has an arc from $i$ to every $j\neq i$.  
\end{proof}

\begin{lemma}\label{lem:limitA_cycles}
Let $f$ be a network with $n$ components. If $f$ has at least $n$ limit cycles of length $\geq 3$, then $K_n\in\G(f)$.  
\end{lemma}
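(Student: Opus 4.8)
The plan is to exhibit a single permutation $\pi$ of $\B^n$ such that the conjugate network $h=\pi\circ f\circ\pi^{-1}$ satisfies $G(h)=K_n$; since $h$ is isomorphic to $f$, this gives $K_n\in\G(f)$. The guiding observation is that to obtain \emph{all} arcs leaving an input component $i$ at once, it suffices to produce a pair of configurations differing only in component $i$ whose images under $h$ are complementary, i.e.\ differ in every component. Indeed, if $h(x)$ and $h(x+e_i)$ differ in all $n$ components, then for each $j$ the output component $j$ depends on input $i$, yielding the arc from $i$ to $j$ (including the loop $j=i$).

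To realise such a pair for every $i$ simultaneously, I would use one limit cycle of length $\geq 3$ per component. Fix, on the cycle assigned to $i$, three consecutive points $x,f(x),f^2(x)$ (distinct because the length is at least $3$), and let $\pi$ send them respectively to $a^i$, $a^i+e_i$ and $a^i+e_i+\ONE$, where $a^i\in\B^n$ is to be chosen below. Then $h(a^i)=a^i+e_i$ and $h(a^i+e_i)=a^i+e_i+\ONE$; the inputs $a^i$ and $a^i+e_i$ differ only in component $i$, while the outputs differ in every component, so this gadget creates every arc out of $i$. Running it over all $i\in[n]$ forces $G(h)=K_n$.

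The delicate point, and the main obstacle, is to choose $a^1,\dots,a^n$ so that the resulting $3n$ target configurations $a^i$, $a^i+e_i$, $a^i+e_i+\ONE$ are pairwise distinct; only then is $\pi$ a well-defined injection on the $3n$ chosen cycle points (themselves distinct, since the cycles are disjoint, whence $3n\leq 2^n$), extendable to a permutation of $\B^n$ by mapping the remaining configurations bijectively onto the remaining targets. I would take $a^i=e_i+e_{i+1}$ with indices read modulo $n$, so that the three targets for $i$ have weights $2$, $1$ and $n-1$ respectively. A short weight argument then gives distinctness: the three weight classes are distinct once $n\geq 4$ (as $n-1\geq 3$), and within each class distinctness reduces to the injectivity of $i\mapsto i+1$ (for the weight-$1$ configurations $e_{i+1}$ and the weight-$(n-1)$ configurations $e_{i+1}+\ONE$) and to the fact that the consecutive pairs $\{i,i+1\}$ are pairwise distinct (for the weight-$2$ configurations).

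Finally, for $n\leq 3$ the hypothesis is vacuous: $n$ disjoint limit cycles of length $\geq 3$ would require $3n$ distinct configurations, but $3n>2^n$, so no such $f$ exists and there is nothing to prove. Thus the construction above, valid for all $n\geq 4$, settles every case.
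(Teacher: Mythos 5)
Your proof is correct and is essentially the paper's own argument: both use three consecutive points $x,f(x),f^2(x)$ on one cycle per component, mapped by $\pi$ to configurations of weights $2$, $1$, and $n-1$ (the paper takes $e_{i-1}+e_i$, $e_{i-1}$, $e_{i-1}+\ONE$, which matches your $e_i+e_{i+1}$, $e_{i+1}$, $e_{i+1}+\ONE$ up to an index shift), so that the two inputs differ only in component $i$ while their images differ everywhere. The only cosmetic difference is that you spell out the weight-based distinctness check and the vacuity for $n\leq 3$, which the paper leaves implicit.
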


\begin{proof}
Suppose that $f$ has $n$ limit cycles of length $\geq 3$; this implies $n\geq 4$. Let $a^1,\dots,a^n$ be configurations inside distinct limit cycles of $f$ of length $\geq 3$. For $i\in [n]$ let $b^i=f(a^i)$ and $c^i=f(b^i)$. Then $a^1,\dots,a^n$, $b^1,\dots,b^n$, $c^1,\dots,c^n$ are all distinct. For $i\in [n]$, let $x^i=e_{i-1}+e_i$,  $y^i=e_{i-1}$ and $z^i=e_{i-1}+\ONE$, where $e_0$ means $e_n$. Since $n\geq 4$, the configurations $x^1,\dots,x^n$, $y^1,\dots,y^n$, $z^1,\dots,z^n$ are all distinct. Hence there is a permutation $\pi$ of $\B^n$ such that, for $i\in [n]$, $\pi(a^i)=x^i$, $\pi(b^i)=y^i$ and $\pi(c^i)=z^i$. Let $h=\pi\circ f\circ\pi^{-1}$. For $i\in [n]$ we have $h(e_{i-1}+e_i)=h(x^i)=\pi(f(a^i))=\pi(b^i)=y^i=e_{i-1}$ and $h(e_{i-1})=h(y^i)=\pi(f(b^i))=\pi(c^i)=z^i=e_{i-1}+\ONE$. Hence $h(e_{i-1}+e_i)$ and $h(e_{i-1})$ differ in every component, and since $e_{i-1}+e_i$ and $e_{i-1}$ only differ in component $i$, we deduce that $G(h)$ has an arc from $i$ to every $j\in [n]$. Thus $G(h)=K_n$. 
\end{proof}

\begin{lemma}\label{lem:independent_set}
Let $f$ be a non-constant network with $n\geq 5$ components. If $f$ has an independent set of size $\geq 2n$, then $K_n\in\G(f)$.  
\end{lemma}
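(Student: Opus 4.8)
The plan is to choose the permutation $\pi$ so that $h=\pi\circ f\circ\pi^{-1}$ complements every coordinate across suitable edges. The basic device is a \emph{hub}: fix a configuration $c$ and a target $d$, and arrange $h(c)=d$ together with $h(c+e_i)=d+\ONE$ for several coordinates $i$. Since $c$ and $c+e_i$ differ only in component $i$ while $d$ and $d+\ONE$ differ in \emph{every} component, each such $i$ yields an arc from $i$ to all of $[n]$; doing this for every $i\in[n]$ gives $G(h)=K_n$. On the side of $f$, a hub asks for one \emph{center} edge $p^0\to f(p^0)$ (to be placed at $c\to d$) together with, for each spoke, a configuration whose image is a fixed value $s$ (all spokes share the image $d+\ONE=\pi(s)$). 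Thus a set of $k$ configurations with a common image $s$, plus one extra edge with image $\neq s$, realizes a hub covering $k$ coordinates.

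To feed this with the independent set, group $A$ by image: $A=\bigsqcup_s A_s$ with $A_s=\{a\in A:f(a)=s\}$, and let $k_1\ge\cdots\ge k_m$ be the sizes, so $\sum_j k_j=|A|\ge 2n$. Independence enters exactly here: $f(A)\cap A=\emptyset$, so each image value $s$ lies outside $A$, which is what lets me place the spokes (elements of $A$) and the image $s$ at disjoint positions of $\B^n$. If some fibre has $k_1\ge n$, a single hub finishes: take $n$ of its elements as the spokes $c+e_1,\dots,c+e_n$, set $\pi(s)=d+\ONE$, and use any edge with image $\neq s$ as the center (which exists because $f\neq\cst$). Hence I may assume $k_j\le n-1$ for all $j$.

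In this regime I cover $[n]$ by several partial hubs: assign to the largest fibres pairwise disjoint coordinate sets $J_1,\dots,J_q$ with $|J_j|\le k_j$ and $\bigsqcup_j J_j=[n]$, which is possible since $\sum_j k_j\ge 2n\ge n$. Hub $j$ supplies its spokes and the image $s_j$; I still owe $q$ center edges. Here lies the only real constraint: because ``differing in every component'' forces the two images of a gadget to be an antipodal pair $y,y+\ONE$, distinct gadgets must use distinct image values, so I need $q$ further edges whose images are distinct and avoid $s_1,\dots,s_q$, i.e.\ $f$ must take at least $2q$ values. This is where $n\ge5$ is used: since every fibre now has at most $n-1$ elements, $2^n=\sum_s|f^{-1}(s)|\le(n-1)\,|f(\B^n)|$, whence $|f(\B^n)|\ge 2q$ for all $n\ge6$ (as $q\le n$ and $2^n\ge 2n(n-1)$), the case $n=5$ being settled by noting that $q=n$ can occur only when all fibres are singletons, in which case $|f(\B^n)|=|A|\ge 2n=2q$. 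Taking preimages of $q$ fresh image values provides the centers.

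It remains to assemble these gadgets into a genuine permutation, and this bookkeeping is the main obstacle. All spoke-, center-, fibre-image- and center-image-positions must be pairwise distinct points of $\B^n$: one chooses the centers $c^{(j)}$ far apart (so the occupied sets $\{c^{(j)}\}\cup\{c^{(j)}+e_i:i\in J_j\}$ are disjoint) and the bases $d^{(j)}$ in a separate region (so the antipodal pairs $d^{(j)},d^{(j)}+\ONE$ avoid everything). Since the gadgets occupy at most $4n$ points and $2^n\ge n^2\ge 4n$ for $n\ge5$, there is ample room; the resulting partial assignment is injective and extends to a permutation $\pi$, for which $h=\pi\circ f\circ\pi^{-1}$ satisfies $G(h)=K_n$. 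The delicate point throughout is reconciling the image-collisions of $f$ on $A$ — which rigidly force colliding images to share a single target — with the injectivity of $\pi$; the bounds $\sum_j k_j\ge 2n$ and $2^n\ge n^2$ are precisely what make every fibre-size regime go through.
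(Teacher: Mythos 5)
There is a genuine gap at the decisive counting step of your multi-hub case. Your case hypothesis bounds only the fibres of $f$ \emph{restricted to} $A$: it says $k_j=|A_{s_j}|=|f^{-1}(s_j)\cap A|\le n-1$. But to manufacture the $q$ extra centre edges you invoke $2^n=\sum_s|f^{-1}(s)|\le (n-1)\,|f(\B^n)|$, which requires every \emph{full} fibre $f^{-1}(s)$ to have at most $n-1$ elements. Nothing gives you that: $|f^{-1}(s)|$ can be enormous while $|f^{-1}(s)\cap A|\le n-1$, because $A$ was an arbitrary independent set, not a union of full fibres. The needed conclusion $|f(\B^n)|\ge 2q$ is in fact false in general under your case hypothesis: let $f$ have image exactly $\{s_1,s_2,s_3\}$ with each full fibre of size at least $n+2$ (possible since $3(n+2)\le 2^n$ for $n\ge 5$), and let $A$ consist of $n-1$ points chosen from each $f^{-1}(s_j)\setminus\{s_1,s_2,s_3\}$. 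Then $A$ is an independent set of size $3(n-1)\ge 2n$, every $A$-fibre has size $n-1$, so you are in the multi-hub case with $q=2$, yet $f$ takes only $3<2q$ values: after reserving $s_1,s_2$ for spokes there is only one fresh image value left, so your two centres cannot have distinct images. (The lemma is still true for such $f$ — one can pair two of the fibres — but your proof as written does not establish it.)

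The source of the trouble is the rigidity of your gadget: you insist that each hub have its own centre whose image is a value of $f$ \emph{outside} $\{s_1,\dots,s_q\}$, and that is exactly what forces the unprovable bound $|f(\B^n)|\ge 2q$. The paper's construction needs no centres at all: it pairs the fibres of $f|_A$ with one another, sending two fibres $A_{2p-1},A_{2p}$ to position sets $X_{2p-1},X_{2p}$ whose images form an antipodal pair $y^p,y^p+\ONE$, with the coordinate-$i$ edges running \emph{between} the two position sets; only values in $f(A)$ are consumed, and the leftover regimes ($|f(A)|$ odd or equal to $1$) are then eliminated by a separate counting argument. Note also that the ``bookkeeping'' you defer at the end is not mere routine: making all spoke, centre and image positions simultaneously consistent when $f$ creates collisions (a centre equal to some $s_{j'}$, a centre image lying among the spokes, a fixed point as centre) imposes constraints of the form $c^{(j)}=d^{(j')}+\ONE$ or $d^{(j)}=c^{(j)}+e_i$ that can chain and conflict; ``ample room in $\B^n$'' does not resolve them, and the analogous consistency problem is precisely what the paper's technical appendix lemma (Lemma~\ref{lem:X}) is written to settle.
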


\begin{proof}
Let $A$ be an independent set of $f$. We first prove:
\begin{itemize}
\item[(1)]
{\em If $|A|\geq n+k$ and $|f(A)|=2k$ for some $1\leq k\leq n$, then $K_n\in\G(f)$.}
\end{itemize}
Suppose these condition holds. One easily check that there is an independent set $A$ with $|A|=n+k$ and $|f(A)|=2k$ for some $1\leq k\leq n$. Let us write $f(A)=\{a^1,\dots,a^{2k}\}$, and let $A_p=f^{-1}(a^p)\cap A$ for $p\in[2k]$. Let $X_1,\dots,X_{2k}$ be disjoint subsets of $\B^n$ of size $|A_1|,\dots,|A_{2k}|$ such that, for all $i\in [n]$, there is $p\in [k]$ and $x\in X_{2p-1}$ with $x+e_i\in X_{2p}$; that these sets exist is the ``technical'' part of the proof, given by Lemma~\ref{lem:X} in appendix.

Let $X=X_1\cup\dots\cup X_{2k}$, let $Y$ be the set of $y\in\B^n$ with $y_1=0$, and let $Y'$ be the set of $y\in Y$ such that $y,y+\ONE\not\in X$. Since $n\geq 5$ and $n\geq k$:
\[
|Y'|\geq |Y|-|X|=2^{n-1}-(n+k)\geq 2^{n-1}-2n\geq n\geq k.
\] 
Thus there are $k$ distinct configurations in $y^1,\dots,y^k\in Y'$ and by construction, $Y''=\{y^1,\dots,y^k\}$ and $Y''+\ONE$ are disjoint and disjoint from~$X$.  

Hence there is a permutation $\pi$ of $\B^n$ such that, for all $p\in [k]$, $\pi(a^{2p-1})=y^p$, $\pi(a^{2p})=y^p+\ONE$, $\pi(A_{2p-1})=X_{2p-1}$ and $\pi(A_{2p})=X_{2p}$. Let $h=\pi\circ f\circ\pi^{-1}$. By construction, for every $i\in [n]$ there is $p\in [k]$ and $x\in X_{2p-1}$ with $x+e_i\in X_{2p}$. Since $\pi^{-1}(x)\in A_{2p-1}$ and $\pi^{-1}(x+e_i)\in A_{2p}$, we have $h(x)\in\pi(f(A_{2p-1}))=\{\pi(a^{2p-1})\}=\{y^p\}$ and $h(x+e_i)\in\pi(f(A_{2p}))=\{\pi(a^{2p})\}=\{y^p+\ONE\}$. Thus $h(x)$ and $h(x+e_i)$ differ in every component, and we deduce that $G(h)=K_n$. This proves (1).

\medskip
We next prove another condition on $A$ to obtain the complete digraph. 
\begin{itemize}
\item[(2)]
{\em If $|A|>n$ and $|f(A)|=1$, then $K_n\in\G(f)$.}
\end{itemize}
Let $a\in\B^n$ such that $f(A)=\{a\}$. Since $f\neq\cst$, there is $b\in\B^n$ with $f(b)\neq a$, and thus $b\not\in A$.  We consider three cases. 

Suppose first that $f(a)\neq a$. Since $|A|>n$, there are $n$ configurations $a^1,\dots,a^n$ in $A$ distinct from $f(a)$. Then $a^1,\dots,a^n,a,f(a)$ are all distinct, so there is a permutation $\pi$ with $\pi(a)=\ZERO$, $\pi(f(a))=\ONE$ and $\pi(a^i)=e_i$ for $1\leq i\leq n$. Let $h=\pi\circ f\circ\pi^{-1}$. For $i\in [n]$, we have $h(e_i)=\pi(f(a^i))=\pi(a)=\ZERO$ and $h(\ZERO)=\pi(f(a))=\ONE$, so $h(e_i)$ and $h(\ZERO)$ differ in every component, and thus $G(h)=K_n$.  

Suppose now that $f(a)=a$ and $f(b)=b$. Let $a^1,\dots,a^n\in A$, all distinct. Then $a^1,\dots,a^n,a,b$ are all distinct since $b=f(b)\neq a$. So there is a permutation $\pi$ with $\pi(a)=\ONE$, $\pi(b)=\ZERO$ and $\pi(a^i)=e_i$ for $i\in [n]$. Let $h=\pi\circ f\circ\pi^{-1}$. For all $i\in [n]$, we have $h(e_i)=\pi(f(a^i))=\pi(a)=\ONE$ and $h(\ZERO)=\pi(f(b))=\pi(b)=\ZERO$, so $h(e_i)$ and $h(\ZERO)$ differ in every component, and thus $G(h)=K_n$.  

Suppose finally that $f(a)=a$ and $f(b)\neq b$. Since $|A|>n$, there is $A'\subseteq A\setminus\{f(b)\}$ of size $n$.  Then $A'\cup\{b\}$ is an independent set of size $n+1$ and $|f(A'\cup\{b\})|=|\{a,f(b)\}|=2$ so $K_n\in \G(f)$ by (1). This proves (2).  

\medskip
We can now conclude the proof. Suppose that $|A|\geq 2n$. Then we can choose $A$ so that $|A|=2n$. Suppose, for a contradiction, that $K_n\not\in\G(f)$. If $|f(A)|$ is even then  $K_n\in\G(f)$ by (1) and if $|f(A)|=1$ then  $K_n\in\G(f)$ by (2). Thus $|f(A)|=2k+1$ for some $1\leq k<n$. Let us write $f(A)=\{a^1,\dots,a^{2k+1}\}$, and let $A_p=f^{-1}(a^p)\cap A$ for $1\leq p\leq 2k+1$. Suppose, without loss, that $|A_1|\leq |A_2|\leq \dots\leq |A_{2k+1}|$. Then $A'=A\setminus A_1$ is an independent set with $|f(A')|=2k$. Setting $m=n+k-1$, if $|A'|>m$, then $K_n\in\G(f)$ by (2). So $2k|A_2|\leq |A'|\leq m$, thus $|A_1|\leq |A_2|\leq m/2k$. We deduce that $2n=|A|=|A_1|+|A'|\leq m/2k+m$. However, one easily checks that $2n> m/2k+m$ when $1\leq k<n$, a contradiction. Thus $K_n\in\G(f)$
\end{proof}

We are ready to prove Theorem~\ref{thm:main1}. Let $f\neq\cst,\id$ with $n\geq 5$ components. Suppose, for a contradiction, that $K_n\not\in\G(f)$. Let $F$ be the set of fixed points of $f$, and let $L$ be a smallest subset of $\B^n$ intersecting every limit cycle of $f$ of length $\geq 3$. Let $\Gamma'$ be obtained from $\Gamma(f)$ by deleting the vertices in $F\cup L$; then $\Gamma'$ has only cycles of length two, thus it is bipartite. Since $K_n\not\in\G(f)$, by Lemmas~\ref{lem:fixed_points} and \ref{lem:limitA_cycles}, we have $|F|<2n$ and $|L|<n$, thus $\Gamma'$ has at least $N=2^n-3n+2$ vertices. Since $\Gamma'$ is bipartite, it has an independent set $A$ of size $\geq N/2$. Then $A$ is an independent set of $f$ and since $K_n\not\in\G(f)$, we deduce from Lemma~\ref{lem:independent_set} that $|A|<2n$. Thus $2n>N/2$, that is, $7n>2^n+3$, which is false since $n\geq 5$. Thus $K_n\in\G(f)$.  

%%%%%%%%%%%%%%%%%%%%%%%%%%%%%%%%%%%%%%%%%%%%%%%%%%%%%%%%%%%%%%%%%%%%%%%%%%%%%%%%%%%%%%%%%%%%%%%%%%%%%%%
%%%%%%%%%%%%%%%%%%%%%%%%%%%%%%%%%%%%%%%%%%%%%%%%%%%%%%%%%%%%%%%%%%%%%%%%%%%%%%%%%%%%%%%%%%%%%%%%%%%%%%%
\section{Proof of Theorem \ref{thm:main2}}
%%%%%%%%%%%%%%%%%%%%%%%%%%%%%%%%%%%%%%%%%%%%%%%%%%%%%%%%%%%%%%%%%%%%%%%%%%%%%%%%%%%%%%%%%%%%%%%%%%%%%%%
%%%%%%%%%%%%%%%%%%%%%%%%%%%%%%%%%%%%%%%%%%%%%%%%%%%%%%%%%%%%%%%%%%%%%%%%%%%%%%%%%%%%%%%%%%%%%%%%%%%%%%%

We first give a necessary and sufficient condition for the presence of a digraph in $\G(f)$ that misses some arc with distinct ends. It has been obtained with K\'evin Perrot, whom we thank, and its proof is in appendix. Given an $n$-component network $f$, and $1\leq k\leq 2^{n-1}$, a {\em $k$-nice set} of $f$ is a set $A\subseteq \B^n$ of size $2k$ with $|f^{-1}(A)|$ and $|f^{-1}(A)\cap A|$ even.

\begin{lemma}\label{lem:nice}
Let $f$ be a $n$-component network and distinct $i,j\in [n]$. Some digraph in $\G(f)$ has no arc from $j$ to $i$ if and only if $f$ has a $(2^{n-2})$-nice~set. 
\end{lemma}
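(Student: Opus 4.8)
The plan is to absorb the isomorphism $\pi$ into an intrinsic condition on $f$. Fix $h=\pi\circ f\circ\pi^{-1}$, write $W_i=\{z\in\B^n : z_i=1\}$, and let $\tau_j$ be the involution $x\mapsto x+e_j$ of $\B^n$. A direct computation gives $\{x\in\B^n : h_i(x)=1\}=\pi\big(f^{-1}(A)\big)$ with $A=\pi^{-1}(W_i)$. Now $G(h)$ has no arc from $j$ to $i$ exactly when $h_i$ does not depend on input $j$, i.e. when the set $\{x : h_i(x)=1\}$ is invariant under $\tau_j$; pulling this back through $\pi$, the condition becomes that $f^{-1}(A)$ is invariant under the conjugated involution $\sigma=\pi^{-1}\circ\tau_j\circ\pi$. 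Here $\sigma$ is a fixed-point-free involution of $\B^n$ and $|A|=2^{n-1}$. Crucially, since $i\neq j$ we have $\tau_j(W_i)=W_i$, so $A$ is automatically $\sigma$-invariant as well.

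For the forward direction, I would take such an $h$ together with the associated $A,\sigma$. Setting $P=f^{-1}(A)$, both $A$ and $P$ are $\sigma$-invariant, so $\sigma$ preserves each of the four cells $A\cap P$, $A\setminus P$, $P\setminus A$, $\B^n\setminus(A\cup P)$ of the partition generated by $A$ and $P$; being fixed-point-free, $\sigma$ restricts to a fixed-point-free involution on each cell, forcing each cell to have even size. A short parity count using $|A|=2^{n-1}$ (even, as $n\geq 2$) collapses the four conditions to $|P|$ even and $|A\cap P|$ even, i.e. $|f^{-1}(A)|$ and $|f^{-1}(A)\cap A|$ even: $A$ is a $(2^{n-2})$-nice set.

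For the converse I would run this backwards, starting from a $(2^{n-2})$-nice set $A$. Writing $P=f^{-1}(A)$, the two parity hypotheses make all four cells of the $(A,P)$-partition even, so I can define a fixed-point-free involution $\sigma$ by pairing up elements arbitrarily inside each cell; by construction $\sigma$ fixes $A$ and $P$ setwise. The crux is then to manufacture a single permutation $\pi$ with $\pi^{-1}(W_i)=A$ and $\pi^{-1}\circ\tau_j\circ\pi=\sigma$ simultaneously. Since $A$ is $\sigma$-invariant of size $2^{n-1}$ it splits into $2^{n-2}$ $\sigma$-pairs, and since $i\neq j$ the set $W_i$ splits into $2^{n-2}$ $\tau_j$-pairs; I match the $\sigma$-pairs of $A$ with the $\tau_j$-pairs of $W_i$ and the $\sigma$-pairs of $\B^n\setminus A$ with the $\tau_j$-pairs of $\B^n\setminus W_i$, defining $\pi$ one pair at a time. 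This $\pi$ is a bijection satisfying both requirements, and the reformulation of the first paragraph then exhibits $h=\pi\circ f\circ\pi^{-1}$ as a network in $\G(f)$ whose interaction graph has no arc from $j$ to $i$.

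I expect the main obstacle to be exactly this simultaneous realization of $\pi$: imposing $\pi(A)=W_i$ and conjugating $\sigma$ to $\tau_j$ at once. It succeeds only because $i\neq j$ makes $W_i$ a union of $\tau_j$-pairs with precisely half of them inside $W_i$, so the pair structures of $(A,\sigma)$ and $(W_i,\tau_j)$ agree in count on both sides; the even-cell computation is what guarantees the pairings needed to build $\sigma$ exist in the first place. The remaining steps are routine parity bookkeeping, so the real content lies in the reformulation via the fixed-point-free involution $\sigma$ and in this pair-matching construction of $\pi$.
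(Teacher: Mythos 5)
Your proof is correct and follows essentially the same route as the paper: your forward direction (invariance of $f^{-1}(A)$ under the fixed-point-free involution $\sigma=\pi^{-1}\circ\tau_j\circ\pi$ forcing even cells) is the paper's argument with its ``closed by $e_j$'' sets rephrased as $\tau_j$-invariance, and your pair-matching construction of $\pi$ in the converse is exactly the paper's balanced partitions of $A\cap f^{-1}(A)$, $A\setminus f^{-1}(A)$ and $f^{-1}(A)\setminus A$ mapped onto $e_j$-translate pairs in the hypercube. The only cosmetic difference is that you conjugate $\tau_j$ globally, also pairing up the cell $\B^n\setminus(A\cup f^{-1}(A))$ (which is automatically of even size), whereas the paper leaves $\pi$ arbitrary outside $A\cup f^{-1}(A)$ and checks the no-arc condition directly.
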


Hence, to prove Theorem \ref{thm:main2}, it is sufficient to prove that, for $n\geq 5$, $f$ has always a $(2^{n-2})$-nice set. To use induction, it is more convenient to prove something stronger: if $n\geq 4$ and $8\leq k\leq 2^{n-1}$ then $f$ has always a $k$-nice set. In particular, if $n\geq 5$ then $2^{n-2}\geq 8$ thus $f$ has a $(2^{n-2})$-nice set and we are done. So it remains to prove:

\begin{lemma}
Let $f$ be a network with $n\geq 4$ components and $8\leq k\leq 2^{n-1}$. Then $f$ has a $k$-nice set.
\end{lemma}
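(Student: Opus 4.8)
The plan is to forget the Boolean-network and hypercube structure entirely: whether a set $A$ is $k$-nice depends only on the functional graph $\Gamma(f)$, i.e.\ on the finite set $V=\B^n$ of size $N=2^n$ together with the map $f\colon V\to V$. Writing $d(v)=|f^{-1}(v)|$ for the in-degree of $v$ in $\Gamma(f)$, a set $A$ of size $2k$ is $k$-nice precisely when $\beta(A):=\sum_{v\in A}d(v)=|f^{-1}(A)|$ and $\gamma(A):=|\{v\in A: f(v)\in A\}|=|f^{-1}(A)\cap A|$ are both even. Here $\beta$ is \emph{linear} in the indicator of $A$ while $\gamma$ is \emph{quadratic} (it counts arcs of $\Gamma(f)$ with both endpoints in $A$); this asymmetry is what makes the two parities behave very differently.

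First I would reduce the whole problem to controlling these two parities at every admissible cardinality. The extreme case $k=2^{n-1}$ is free: then $A=V$, so $\beta(A)=\gamma(A)=2^n$ are even and $A$ is $k$-nice. To reach smaller $k$ I would descend from $A=V$ by repeatedly \emph{deleting a pair} $\{y,z\}\subseteq A$, which lowers the size by exactly $2$, so that $2^{n-1}-k$ such deletions reach size $2k$. The effect of a single deletion is local: $\Delta\beta\equiv d(y)+d(z)\pmod 2$, while $\Delta\gamma\pmod 2$ is governed by the number of arcs incident to $y$ and $z$ whose other endpoint lies in $A$ (together with the loops and any arc between $y$ and $z$). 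The task becomes: at each step, exhibit a deletable pair that keeps both parities even. This is where the induction on $n$ lives — one proves the statement for the widened range $8\le k\le 2^{n-1}$ so that, as one steps through sizes, every cardinality one lands on is itself covered by the inductive hypothesis.

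The parity of $\beta$ is easy to handle: since $|A|\ge 2k\ge 16$, by pigeonhole at least $8$ vertices of $A$ share the same in-degree parity, so same-parity pairs $\{y,z\}$ (for which $\Delta\beta\equiv 0$) are always abundant. The real work — and the step I expect to be the main obstacle — is to control $\gamma$ simultaneously: among the many admissible same-parity pairs one must find one whose local arc count is even. I would argue this cannot fail by a short case analysis on the structure of $\Gamma(f)$ around the candidate vertices (fixed points, $2$-cycles, and vertices whose in-neighbours lie inside versus outside $A$), using the slack $k\ge 8$ to guarantee enough independent candidates; when no single good pair is visible, taking the symmetric difference of two ``almost good'' pairs should correct the remaining $\mathbb{F}_2$-discrepancy in the two parities at once. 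The delicate point is the lower end of the range, $k=8$, where the pool of candidates is smallest, and it is precisely the hypotheses $n\ge 4$ and $k\ge 8$ that keep this argument from running out of room.
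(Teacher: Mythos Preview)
Your high-level strategy coincides with the paper's: start from $A=\B^n$ (trivially $(2^{n-1})$-nice) and descend by deleting pairs $\{x,y\}\subseteq A$ that preserve both parities. Where you part ways is in the execution of the step you yourself flag as ``the real work,'' and there your sketch does not amount to a proof.

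The paper handles that step as follows. It labels each $x\in A$ by \emph{three} bits, not one: besides your in-degree parity $\beta(x)=[\,|f^{-1}(x)|\text{ even}\,]$, it records $\alpha(x)=[\,f(x)\in A\,]$ and $\gamma(x)=[\,|(f^{-1}(x)\cap A)\setminus\{x\}|\text{ even}\,]$. Call $x,y$ \emph{independent} if $f(x)\neq y$ and $f(y)\neq x$. A direct check shows that if $x,y\in A$ agree in all three bits and are independent, then $A\setminus\{x,y\}$ is $(k-1)$-nice. Now count: vertices with $\alpha=0$ are pairwise independent, so each of those four classes has size $\le 1$; for $\alpha=1$, any class of size $\ge 4$ already contains an independent pair, so those four classes have size $\le 3$. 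Hence if no good pair exists, $|A|\le 4+12=16$, contradicting $|A|=2k\ge 18$ in the inductive step $k>8$.

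Your proposal tracks only in-degree parity explicitly; the pigeonhole giving ``at least $8$ vertices of the same in-degree parity'' is correct but insufficient: you must still subdivide those by the analogues of $\alpha$ and $\gamma$ and then locate an \emph{independent} pair inside one cell, and that is precisely the $16$-versus-$18$ count above. The ``symmetric difference of two almost-good pairs'' idea does not work as stated: the symmetric difference of two $2$-sets has $0$, $2$, or $4$ elements, and even when it has $2$, there is no reason the new pair makes $\Delta\gamma$ even (the $f$-interactions among the four vertices do not cancel linearly). Finally, ``induction on $n$'' looks like a slip; the argument you describe is the descent on $k$.
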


\begin{proof}
We proceed by induction on $k$, decreasing from $2^{n-1}$ to $8$. For the base case, observe that $\B^n$ is a $(2^{n-1})$-nice set of $f$. Suppose that $8<k\leq 2^{n-1}$, and suppose that $f$ has a $k$-nice set $A$. We will prove that $f$ has a $(k-1)$-nice set included in $A$, thus completing the induction step. For that, we will define an equivalence relation on $A$, with at most $8$ equivalence classes, and show that, since $A\geq 18$, there there always are two equivalent elements $x,y\in A$ such that $A\setminus\{x,y\}$ is a $(k-1)$-nice set of $f$. This relation is defined through $3$ binary properties on the elements of $A$, defined below. 

Let $\alpha,\beta,\gamma:A\to\B$ be defined by: for all $x\in A$, 
\begin{enumerate}
\item $\alpha(x)=1$ if and only if $f(x)\in A$,
\item $\beta(x)=1$ if and only if $|f^{-1}(x)|$ is even, 
\item $\gamma(x)=1$ if and only if $|(f^{-1}(x)\cap A)\setminus\{x\}|$ is even. 
\end{enumerate}
We say that $x,y\in A$ are {\em equivalent} if $\alpha(x)=\alpha(y)$ and $\beta(x)=\beta(y)$ and $\gamma(x)=\gamma(y)$. We say that $x,y\in A$ are {\em independent} if $f(x)\neq y$ and $f(y)\neq x$. It is straightforward (and annoying) to check that if $x,y\in A$ are equivalent and independent, then $A\setminus\{x,y\}$ is a $(k-1)$-nice of $f$. 

Suppose now that $f$ has no $(k-1)$-nice set. By the above property, there is no equivalence class containing two independent elements. Let $A_0=\alpha^{-1}(0)$ and $A_1=\alpha^{-1}(1)$. Each of $A_0,A_1$ contains at most $4$ equivalence classes. If $x,y\in A_0$, then $f(x),f(y)\not\in A$, and thus $x$ and $y$ are independent. We deduce that each of the classes contained in $A_0$ is of size $<2$, and thus $|A_0|\leq 4$. Also, one easily checks that any class of size $\geq 4$ contains two independent elements. Hence each of the classes contained in $A_1$ is of size $<4$, and thus $|A_1|\leq 12$. But then $|A|=|A_1|+|A_2|\leq 16$, a contradiction. Thus $f$ has a $(k-1)$-nice set.
\end{proof}

%Let $D$ be the digraph with vertex set $X$ and an arc from $x$ to $y$ if $x\neq y$ and $f(x)=y$. We will prove that $D$ has two independent vertices. This is obvious if $D$ is acyclic or has a cycle of length $\geq 4$. Otherwise, $D$ has a cycle $C$ of length $2$ or $3$. Since $|X|\geq 4$ there is a vertex $x$ not in $C$ and since $C$ is of length $\geq 2$, there is a vertex $y$ in $C$ distinct from $f(x)$. Then $x$ and $y$ are independent. Hence $X$ contains two independent elements, and thus, by ($*$), there is a $(k-1)$-nice set, a contradiction.

%%%%%%%%%%%%%%%%%%%%%%%%%%%%%%%%%%%%%%%%%%%%%%%%%%%%%%%%%%%%%%%%%%%%%%%%%%%%%%%%%%%%%%%%%%%%%%%%%%%%%%%
%%%%%%%%%%%%%%%%%%%%%%%%%%%%%%%%%%%%%%%%%%%%%%%%%%%%%%%%%%%%%%%%%%%%%%%%%%%%%%%%%%%%%%%%%%%%%%%%%%%%%%%
\section{Proof of Theorem \ref{thm:main3}}
%%%%%%%%%%%%%%%%%%%%%%%%%%%%%%%%%%%%%%%%%%%%%%%%%%%%%%%%%%%%%%%%%%%%%%%%%%%%%%%%%%%%%%%%%%%%%%%%%%%%%%%
%%%%%%%%%%%%%%%%%%%%%%%%%%%%%%%%%%%%%%%%%%%%%%%%%%%%%%%%%%%%%%%%%%%%%%%%%%%%%%%%%%%%%%%%%%%%%%%%%%%%%%%

If $1\leq n\leq 9$ and $f$ is the identity on $\B^n$, then $\G(f)$ contains a unique digraph, which has $n=n^2/n\geq n^2/9$ vertices.  This proves the theorem for $n\leq 9$. We treat the other cases with an explicit construction. 

Given $A\subseteq \B^n$ with $\ZERO\in A$, we denote by $f^A$ the $n$-component network such that $f^A(A)=\{\ZERO\}$ and $f(b)=b$ for all $b\not\in A$. We will prove, in the next lemma, that if the size of $A$ is carefully chosen, then, independently on the structure of $A$, $G(f^A)$ has at least $n^2/9$ arcs. It is then easy to prove that this lower bound holds for every digraph in $\G(f^A)$.

\begin{lemma}\label{lem:f^A}
Let $n\geq 9$ and $A\subseteq \B^n$ of size $\lceil 2^{n/4}\rceil$ with $\ZERO\in A$. Then $G(f^A)$ has at least $n^2/9$ arcs. 
\end{lemma}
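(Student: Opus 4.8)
The plan is to read off the arcs of $G(f^A)$ from the edge-boundary of $A$ in the hypercube, and then to invoke the edge-isoperimetric inequality to force this boundary to be large and spread out. First I would analyze, for an edge $\{x,x+e_j\}$ of $\B^n$, the three cases according to whether $0$, $1$ or $2$ of its endpoints lie in $A$. If both endpoints lie in $A$ they are both sent to $\ZERO$, so the edge creates no dependency; if neither lies in $A$ then $f^A$ acts as the identity on them and only component $j$ can differ, creating the \emph{loop} $j\to j$. Since $|A|=\lceil 2^{n/4}\rceil$ is far below $2^{n-1}$, for each $j$ there is some $x$ with $x,x+e_j\notin A$, so every loop $j\to j$ is present: this already gives $n$ arcs. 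The remaining arcs come from \emph{boundary} edges, with exactly one endpoint $u\in A$. Writing $C_j=\{u\in A : u+e_j\notin A\}$ for the set of inside endpoints of boundary edges in direction $j$, a short computation shows that for $i\neq j$ the arc $j\to i$ is present iff some $u\in C_j$ has $u_i=1$; equivalently the off-diagonal out-neighbourhood of $j$ is $S_j\setminus\{j\}$, where $S_j=\bigcup_{u\in C_j}\ONE(u)$.

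Next I would bound the out-degrees from below by a support-counting argument. Every $u\in C_j$ is supported on $S_j$, and there are at most $2^{|S_j|}$ such configurations, so $|C_j|\le 2^{|S_j|}$, that is $|S_j|\ge\log_2|C_j|$. Hence the number of off-diagonal arcs leaving $j$ is at least $\log_2|C_j|-1$ whenever $C_j\neq\emptyset$. Adding the $n$ loops, the total number of arcs of $G(f^A)$ is at least $\sum_{j:\,C_j\neq\emptyset}\log_2|C_j|$, the $-1$ terms being absorbed by the loops since there are at most $n$ nonempty sets $C_j$.

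It then remains to show $\sum_j\log_2|C_j|\ge n^2/9$. The key quantitative input is that $\sum_j|C_j|$ is exactly the number of boundary edges of $A$, so by the edge-isoperimetric inequality in $\B^n$ it is at least $m(n-\log_2 m)$ with $m=|A|=\lceil 2^{n/4}\rceil$; since $n/4\le\log_2 m<n/4+1$, this boundary is at least about $\tfrac{3}{4}mn$. Because $\log_2$ is concave and each $|C_j|\le m$, I would use the chord bound $\log_2 c\ge\frac{c-1}{m-1}\log_2 m$, valid for $1\le c\le m$, to turn the lower bound on $\sum_j|C_j|$ into $\sum_j\log_2|C_j|\ge\frac{\log_2 m}{m-1}\big(\sum_j|C_j|-n\big)$, which is of order $\log_2 m\cdot(n-\log_2 m)\approx\tfrac{n}{4}\cdot\tfrac{3n}{4}=\tfrac{3n^2}{16}$. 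Since $3/16>1/9$, the bound $n^2/9$ follows after accounting for the lower-order terms.

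The main obstacle I anticipate is quantitative rather than structural: squeezing the constant down to exactly $n^2/9$ requires controlling the error terms (the $-n$ coming from deleting the index $j$ from each $S_j$, the ceiling in the definition of $m$, and the $-1$ in $n-\log_2 m$), all of which are comparable to $n^2/9$ for the smallest admissible values of $n$. I expect to handle this by keeping the $n$ loops explicitly in the count so that they cancel the $-n$, and by checking the residual inequality for $n$ close to $9$ directly, using that the number of arcs is an integer so that a real lower bound slightly below $n^2/9$ still forces an arc count of at least $n^2/9$.
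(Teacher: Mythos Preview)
Your approach is correct and genuinely different from the paper's. Both proofs rest on the edge-isoperimetric inequality in the hypercube, but they deploy it in opposite directions. The paper argues by contradiction: assuming fewer than $n^2/9$ arcs, it first shows the support $I=\bigcup_{a\in A}\ONE(a)$ satisfies $|I|>n/2$, then locates a high-weight element of $A$, then proves that the set $A'=\{a\in A: w(a)\ge n/3-1\}$ induces a subgraph of $Q_n$ of minimum degree $>n/3$, and finally applies the isoperimetric bound $|A'|\ge 2^{d}$ to this subgraph to obtain $|A|\ge|A'|>2^{n/3}$, contradicting $|A|=\lceil 2^{n/4}\rceil$. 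Your argument instead reads the arc set of $G(f^A)$ directly off the boundary of $A$: the off-diagonal out-neighbourhood of $j$ is the support of $C_j=\{u\in A:u+e_j\notin A\}$ minus $\{j\}$, whence the out-degree is at least $\log_2|C_j|-1$; summing and applying the isoperimetric lower bound on $\sum_j|C_j|=\partial(A)$ together with the chord inequality for $\log_2$ gives a direct lower bound on the arc count.

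What each approach buys: the paper's route is more structural and shows in passing where the obstruction to sparsity lives (a dense high-weight cluster inside $A$), but it requires four separate claims and only certifies the constant $1/9$. Your route is shorter, avoids all case analysis, and in fact yields the asymptotically better constant $3/16$: the bound $\frac{\log_2 m}{m-1}\bigl(m(n-\log_2 m)-n\bigr)$ already exceeds $n^2/9$ for every $n\ge 9$ (for $n=9$ one has $m=5$ and the expression is about $14.2>9$), so the small-$n$ worry you flag does not actually materialise and no integer-rounding trick is needed.
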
 

The key tool is the following lemma, which can be easily deduced from the well known Harper's isoperimetric inequality in the hypercube (see the appendix). Let $Q_n$ be the hypercube graph, with vertex set $\B^n$ and an edge between $x$ and $y$ if and only if $x$ and $y$ differ in exactly one component. 

\begin{lemma}\label{lem:cube}
Let $A\subseteq \B^n$ be non-empty, and let $d$ be the average degree of the subgraph of $Q_n$ induced by $A$. Then $|A|\geq 2^d$.
\end{lemma}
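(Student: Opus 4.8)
The plan is to prove Lemma~\ref{lem:cube}, the isoperimetric consequence that any non-empty $A\subseteq\B^n$ satisfies $|A|\geq 2^d$, where $d$ is the average degree of the subgraph $Q_n[A]$ of the hypercube induced by $A$. The natural route is to invoke Harper's vertex/edge-isoperimetric inequality in the cube, which controls the number of internal edges $e(A)$ of a set of a given size. More precisely, Harper's theorem (in the Bernstein/Hart form) states that among all subsets of $\B^n$ of a fixed cardinality, the initial segments of the binary (or subcube) order maximise the number of induced edges; equivalently, $e(A)\leq \tfrac12|A|\log_2|A|$ for every $A$. I would take this bound as the starting point and quote it as the ``well known Harper's isoperimetric inequality'' referenced in the statement.

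First I would relate the average degree to the edge count: by the handshake identity for the induced subgraph, $d=2e(A)/|A|$, so $e(A)=\tfrac12 d\,|A|$. Substituting this into Harper's bound $e(A)\leq\tfrac12|A|\log_2|A|$ gives $\tfrac12 d\,|A|\leq\tfrac12|A|\log_2|A|$, and since $|A|>0$ we may cancel $\tfrac12|A|$ to obtain $d\leq\log_2|A|$. Exponentiating base two yields $2^d\leq|A|$, which is exactly the claim. The logic is short once the right form of Harper is in hand.

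The one step requiring care is pinning down precisely which isoperimetric statement to cite and verifying the edge bound $e(A)\leq\tfrac12|A|\log_2|A|$ holds with no additive slack that would spoil the clean inequality. The sharp statement is that a Hamming ball / initial segment of size $2^m$ is a subcube and has exactly $\tfrac12\cdot 2^m\cdot m$ internal edges, giving equality $e=\tfrac12|A|\log_2|A|$ at powers of two; for intermediate sizes the extremal configuration (an initial segment in the simplicial order) has strictly fewer edges than this smooth upper bound, so the inequality $e(A)\leq\tfrac12|A|\log_2|A|$ is safe for all $|A|$. I expect this verification — confirming that the standard form of Harper indeed delivers $e(A)\leq\tfrac12|A|\log_2|A|$ rather than some weaker constant — to be the main (and essentially the only) obstacle, and it is the reason the authors defer the detailed deduction to the appendix. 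Everything else is the two-line algebraic manipulation above.
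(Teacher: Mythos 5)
Your proof is correct and takes essentially the same route as the paper: the paper applies the smoothed Harper bound in the edge-boundary form $\partial(A)\geq|A|(n-\log_2|A|)$ and computes $\partial(A)=\sum_{a\in A}(n-d(a))=|A|(n-d)$, whereas you use the equivalent internal-edge form $e(A)\leq\tfrac12|A|\log_2|A|$ together with the handshake identity $2e(A)=|A|d$; since $n|A|=2e(A)+\partial(A)$ in $Q_n$, these are the same argument in complementary bookkeeping. Your starting bound is indeed a standard and correct consequence of Harper's edge-isoperimetric theorem (your only slip is terminological: induced edges are maximised by initial segments of the binary order, not the simplicial order, but this does not affect the argument), so the cancellation and exponentiation go through exactly as you describe.
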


\begin{proof}[\BF{Proof of Lemma~\ref{lem:f^A}}]
Let $f=f^A$ and let $m$ be the number of arcs in $G(f)$. Suppose, for a contradiction that $m<n^2/9$. Let $I$ be the set of $i\in [n]$ with $a_i=1$ for some $a\in A$. Note that $|I|\geq n/4$. Indeed, let $B$ be the set of $b\in\B^n$ such that $b_i=0$ for all $i\not\in I$. Then $A\subseteq B$, and $|B|=2^{|I|}$, thus $|I|= \log_2 |B|\geq \log_2|A|\geq n/4$. From that observation, we can say something stronger:

\begin{itemize}
\item[(1)] {\em $|I|>n/2$.}
\end{itemize}

\noindent
Let $i\in I$ and $j\not\in I$. Let $a\in A$ such that $a_i=1$. Since $j\not\in I$, $a+e_j\not\in A$, thus $f(a)=\ZERO$ and $f(a+e_j)=a+e_i$, and $f_i(a+e_j)=a_i=1$ since $j\neq i$. Hence $G(f)$ has an arc from $j$ to $i$. We deduce that $|I|\cdot (n-|I|)\leq m<n^2/9$, so $n/4\cdot (n-|I|)<n^2/9$. Thus $|I|>5n/9>n/2$. This proves (1). 

\medskip
We deduce that some $a\in A$ has a large weight.

\begin{itemize}
\item[(2)] {\em There is $a\in A$ with $w(a)>3n/4$.}
\end{itemize}

\noindent
Suppose that $w(a)\leq 3n/4$ for all $a\in A$. Let $i\in I$. Let $a\in A$ with $a_i=1$, and $w(a)$ maximal for that property. Then, for all $j\in\ZERO(a)$, we have $a+e_j\not\in A$. Hence $f(a)=\ZERO$ and $f(a+e_j)=a+e_j$ so $f_i(a+e_j)=a_i=1$ since $i\neq j$. Thus $G(f)$ has an arc from $j$ to $i$. We deduce that the in-degree of $i$ in $G(f)$ is at least $|\ZERO(a)|=n-w(a)\geq n/4$. Hence $m\geq |I|\cdot n/4$ and since $|I|>n/2$ by (1) we obtain $m>n^2/8>n^2/9$, a contradiction. This proves (2).

\medskip
Let $A'$ be the set of $a\in A$ with $w(a)\geq n/3-1$, which is not empty by (2). For $a\in A$, let $J(a)$ be the set of $j\in [n]$ such that $a+e_j\in A$, and let $J'(a)$ be the set of $j\in [n]$ such that $a+e_j\in A'$. Let $H$ be the subgraphs of $Q_n$ induced by $A'$. If $a\in A'$, then $d(a)=|J'(a)|$ is the degree of $a$ in $H$. We will prove in (3) and (4) below that each vertex in $H$ has  large degree. 

\begin{itemize}
\item[(3)] {\em If $a\in A'$ and $w(a)\geq n/3$ then $d(a)>n/3$.}
\end{itemize}

\noindent
Indeed, since $w(a)\geq n/3$, for $j\in J(a)$ we have $w(a+e_j)\geq n/3-1$ thus $j\in J'(a)$. Hence $J(a)=J'(a)$ so $d(a)=|J(a)|$. For all $i\not\in J(a)$ we have $f(a)=\ZERO$ and $f(a+e_i)=a+e_i$, thus $G(f)$ has an arc from $i$ to each component in $\ONE(a+e_i)$. Thus the out-degree of $i$ in $G(f)$ has at least $w(a+e_i)\geq n/3-1$. We deduce that $(n-|J(a)|)\cdot (n/3-1)\leq m<n^2/9$, so $d(a)=|J(a)|>2n/3-3$ and since $n\geq 9$ we obtain $d(a)>n/3$. This proves (3). 

\begin{itemize}
\item[(4)] {\em If $a\in A'$ and $w(a)<n/3$ then $d(a)>n/3$.}
\end{itemize}

\noindent
Let $K=\ZERO(a)\setminus J'(a)$ and $i\in K$. We have $w(a+e_i)=w(a)+1\geq n/3$, thus if $a+e_i\in A$ then $a+e_i\in A'$; but then $i\in J'(a)$, a contradiction. Thus $a+e_i\not\in A$, so $f(a)=\ZERO$ and $f(a+e_i)=a+e_i$. We deduce that the out-degree of $i$ in $G(f)$ is at least $w(a+e_i)\geq n/3$. Thus $|K|\cdot n/3\leq m<n^2/9$ so $|K|<n/3$. Since $w(a)<n/3$ we have $|\ZERO(a)|>2n/3$. So $|K|\geq 2n/3-|J'(a)|$ and thus $d(a)=|J'(a)|>n/3$. This proves (4).

\medskip
We are now in position to finish the proof. Let $d$ be the average degree of $H$. By $(3)$ and $(4)$, we have $d>n/3$. Using Lemma~\ref{lem:cube} we obtain $2^{n/3}<2^d\leq |A'|\leq |A|=\lceil 2^{n/4}\rceil$, which is false because $n\geq 9$. 
\end{proof}

We are now in position to conclude. Let $n\geq 9$ and $A\subseteq \B^n$ of size $\lceil 2^{n/4}\rceil$ with $\ZERO\in A$, and let $f=f^A$. Let $\pi$ be any permutation of $\B^n$ and $h=\pi\circ f\circ\pi^{-1}$. We will prove that $G(h)$ has at least $n^2/9$ arcs. Let $A'=\pi(A)$ and $z=\pi(\ZERO)\in A'$. We have $h(A')=\pi(f(A))=\pi(\ZERO)=z$, and for $b\not\in A'$, $\pi^{-1}(b)\not\in A$ thus $h(b)=\pi(f(\pi^{-1}(b)))=\pi(\pi^{-1}(b))=b$. Let $h'$ be the $n$-component network defined by $h'(x)=h(x+z)+z$ for all $x\in\B^n$. It is easy to check that $G(h')=G(h)$. Let $A''=A'+z$, and note that $A''+z=A'$ and $|A''|=|A'|=|A|$, and also $\ZERO\in A''$ since $z\in A'$. We have $h'(A'')=h(A''+z)+z=h(A')+z=z+z=\ZERO$. Furthermore, for $b\not\in A''$, we have $b+z\not\in A'$ thus $h'(b)=h(b+z)+z=b+z+z=b$. Hence $h'=f^{A''}$ and since $|A''|=|A|=\lceil 2^{n/4}\rceil$, by Lemma~\ref{lem:f^A}, $G(h')$ has at least $n^2/9$ arcs.

%%%%%%%%%%%%%%%%%%%%%%%%%
\bibliographystyle{plain}
\bibliography{BIB}
%%%%%%%%%%%%%%%%%%%%%%%%%

\appendix

\section{Examples}

Consider the $6$ following networks with two components, denoted from $f^1$ to $f^6$ (each is given under three forms: a table, a graph, and~formulas). 
\[
\begin{array}{l}
%f1
\begin{array}{ccccc}
\begin{array}{c|c}
x & f^1(x)\\\hline
00&10\\
01&00\\
10&11\\
11&01
\end{array}
&&
\begin{array}{c}
\begin{tikzpicture}
\useasboundingbox (-0.3,-0.6) rectangle (1.5,1.8);
\node (00) at (0,0){$00$};
\node (01) at (0,1.2){$01$};
\node (10) at (1.2,0){$10$};
\node (11) at (1.2,1.2){$11$};
\path[thick,->,draw,black]
(00) edge (10)
(01) edge (00)
(10) edge (11)
(11) edge (01)
;
\end{tikzpicture}
\end{array}
&&
\begin{array}{l}
f^1_1(x)=x_2+1\\[2mm]
f^1_2(x)=x_1\\
\end{array}
\end{array}
\\~\\
%f2
\begin{array}{ccccc}
\begin{array}{c|c}
x & f^2(x)\\\hline
00&10\\
01&11\\
10&00\\
11&01
\end{array}
&&
\begin{array}{c}
\begin{tikzpicture}
\useasboundingbox (-0.3,-0.6) rectangle (1.5,1.8);
\node (00) at (0,0){$00$};
\node (01) at (0,1.2){$01$};
\node (10) at (1.2,0){$10$};
\node (11) at (1.2,1.2){$11$};
\path[thick,->,draw,black]
(00) edge[bend left=10] (10)
(10) edge[bend left=10] (00)
(01) edge[bend left=10] (11)
(11) edge[bend left=10] (01)
;
\end{tikzpicture}
\end{array}
&&
\begin{array}{l}
f^2_1(x)=x_1+1\\[2mm]
f^2_2(x)=x_2\\
\end{array}
\end{array}
\\~\\
%f3
\begin{array}{ccccc}
\begin{array}{c|c}
x & f^3(x)\\\hline
00&00\\
01&10\\
10&11\\
11&01
\end{array}
&&
\begin{array}{c}
\begin{tikzpicture}
\useasboundingbox (-0.3,-0.6) rectangle (1.5,1.8);
\node (00) at (0,0){$00$};
\node (01) at (0,1.2){$01$};
\node (10) at (1.2,0){$10$};
\node (11) at (1.2,1.2){$11$};
\draw[->,thick] (00.-112) .. controls (-0.5,-0.7) and (0.5,-0.7) .. (00.-68);
\path[thick,->,draw,black]
(01) edge (10)
(10) edge (11)
(11) edge (01)
;
\end{tikzpicture}
\end{array}
&&
\begin{array}{l}
f^3_1(x)=x_1+x_2\\[2mm]
f^3_2(x)=x_1\\
\end{array}
\end{array}
\\~\\
%f4
\begin{array}{ccccc}
\begin{array}{c|c}
x & f^4(x)\\\hline
00&00\\
01&11\\
10&10\\
11&01
\end{array}
&&
\begin{array}{c}
%\Gamma(f^3)\\[2mm]
\begin{tikzpicture}
\useasboundingbox (-0.3,-0.6) rectangle (1.5,1.8);
\node (00) at (0,0){$00$};
\node (01) at (0,1.2){$01$};
\node (10) at (1.2,0){$10$};
\node (11) at (1.2,1.2){$11$};
\draw[->,thick] (00.-112) .. controls (-0.5,-0.7) and (0.5,-0.7) .. (00.-68);
\draw[->,thick] (10.-112) .. controls ({1.2-0.5},-0.7) and ({1.2+0.5},-0.7) .. (10.-68);
\path[thick,->,draw,black]
(01) edge[bend left=10] (11)
(11) edge[bend left=10] (01)
;
\end{tikzpicture}
\end{array}
&&
\begin{array}{l}
f^4_1(x)=x_1+x_2\\[2mm]
f^4_2(x)=x_2\\
\end{array}
\end{array}
\\~\\
%f5
\begin{array}{ccccc}
\begin{array}{c|c}
x & f^5(x)\\\hline
00&01\\
01&11\\
10&11\\
11&01
\end{array}
&&
\begin{array}{c}
\begin{tikzpicture}
\useasboundingbox (-0.3,-0.6) rectangle (1.5,1.8);
\node (00) at (0,0){$00$};
\node (01) at (0,1.2){$01$};
\node (10) at (1.2,0){$10$};
\node (11) at (1.2,1.2){$11$};
\path[thick,->,draw,black]
(01) edge[bend left=10] (11)
(11) edge[bend left=10] (01)
(00) edge (01)
(10) edge (11)
;
\end{tikzpicture}
\end{array}
&&
\begin{array}{l}
f^5_1(x)=x_1+x_2\\[2mm]
f^5_2(x)=1\\
\end{array}
\end{array}
\\~\\
%f6
\begin{array}{ccccc}
\begin{array}{c|c}
x & f^6(x)\\\hline
00&00\\
01&00\\
10&10\\
11&10
\end{array}
&&
\begin{array}{c}
\begin{tikzpicture}
\useasboundingbox (-0.3,-0.6) rectangle (1.5,1.8);
\node (00) at (0,0){$00$};
\node (01) at (0,1.2){$01$};
\node (10) at (1.2,0){$10$};
\node (11) at (1.2,1.2){$11$};
\draw[->,thick] (00.-112) .. controls (-0.5,-0.7) and (0.5,-0.7) .. (00.-68);
\draw[->,thick] (10.-112) .. controls ({1.2-0.5},-0.7) and ({1.2+0.5},-0.7) .. (10.-68);
\path[thick,->,draw,black]
(01) edge (00)
(11) edge (10)
;
\end{tikzpicture}
\end{array}
&&
\begin{array}{l}
f^6_1(x)=x_1\\[2mm]
f^6_2(x)=0\\
\end{array}
\end{array}
\end{array}
\]
One can check that, given a network $f\neq\cst,\id$ with $2$ components: {\em $K_2\not\in\G(f)$ if and only if $f$ is isomorphic to one of the network given above}. Thus these $6$ networks are the counter examples of Theorem~\ref{thm:main1} for $n=2$. For instance, there are $6$ networks isomorphic to $f^1$, denoted from $h^1$ to $h^6$ (with $h^1=f^1$, given below with their interactions graphs. We deduce that $\G(f^1)$ contains three digraphs, each distinct from $K_2$.  

\[
\begin{array}{l}
%h1
\begin{array}{ccccccc}
\begin{array}{c|c}
x & h^1(x)\\\hline
00&10\\
01&00\\
10&11\\
11&01
\end{array}
&&
\begin{array}{c}
\begin{tikzpicture}
\useasboundingbox (-0.3,-0.6) rectangle (1.5,1.8);
\node (00) at (0,0){$00$};
\node (01) at (0,1.2){$01$};
\node (10) at (1.2,0){$10$};
\node (11) at (1.2,1.2){$11$};
\path[thick,->,draw,black]
(00) edge (10)
(01) edge (00)
(10) edge (11)
(11) edge (01)
;
\end{tikzpicture}
\end{array}
&&
\begin{array}{l}
h^1_1(x)=x_2+1\textcolor{white}{+x_1}\\[2mm]
h^1_2(x)=x_1\textcolor{white}{+x_2+1}\\
\end{array}
&&
\begin{array}{c}
G(h^1)\\[2mm]
\begin{tikzpicture}
\useasboundingbox (-0.7,-0.4) rectangle (2.2,0.4);
\node[outer sep=1,inner sep=2,circle,draw,thick] (1) at (0,0){$1$};
\node[outer sep=1,inner sep=2,circle,draw,thick] (2) at (1.5,0){$2$};
\path[->,thick]
(2) edge[bend right=25] (1)
(1) edge[bend right=25] (2)
;
\end{tikzpicture}
\end{array}
\end{array}
\\~\\
%h2
\begin{array}{ccccccc}
\begin{array}{c|c}
x & h^2(x)\\\hline
00&01\\
01&11\\
10&00\\
11&01
\end{array}
&&
\begin{array}{c}
\begin{tikzpicture}
\useasboundingbox (-0.3,-0.6) rectangle (1.5,1.8);
\node (00) at (0,0){$00$};
\node (01) at (0,1.2){$01$};
\node (10) at (1.2,0){$10$};
\node (11) at (1.2,1.2){$11$};
\path[thick,->,draw,black]
(00) edge (01)
(01) edge (11)
(10) edge (00)
(11) edge (10)
;
\end{tikzpicture}
\end{array}
&&
\begin{array}{l}
h^2_1(x)=x_2\textcolor{white}{+x_1+1}\\[2mm]
h^2_2(x)=x_1+1\textcolor{white}{+x_2}\\
\end{array}
&&
\begin{array}{c}
G(h^2)\\[2mm]
\begin{tikzpicture}
\useasboundingbox (-0.7,-0.4) rectangle (2.2,0.4);
\node[outer sep=1,inner sep=2,circle,draw,thick] (1) at (0,0){$1$};
\node[outer sep=1,inner sep=2,circle,draw,thick] (2) at (1.5,0){$2$};
\path[->,thick]
(2) edge[bend right=25] (1)
(1) edge[bend right=25] (2)
;
\end{tikzpicture}
\end{array}
\end{array}
\\~\\
%h3
\begin{array}{ccccccc}
\begin{array}{c|c}
x & h^3(x)\\\hline
00&10\\
01&11\\
10&01\\
11&00
\end{array}
&&
\begin{array}{c}
\begin{tikzpicture}
\useasboundingbox (-0.3,-0.6) rectangle (1.5,1.8);
\node (00) at (0,0){$00$};
\node (01) at (0,1.2){$01$};
\node (10) at (1.2,0){$10$};
\node (11) at (1.2,1.2){$11$};
\path[thick,->,draw,black]
(00) edge (10)
(01) edge (11)
(10) edge (01)
(11) edge (00)
;
\end{tikzpicture}
\end{array}
&&
\begin{array}{l}
h^3_1(x)=x_1+1\textcolor{white}{+x_2}\\[2mm]
h^3_2(x)=x_1+x_2\textcolor{white}{+1}\\
\end{array}
&&
\begin{array}{c}
G(h^3)\\[2mm]
\begin{tikzpicture}
\useasboundingbox (-0.7,-0.4) rectangle (2.2,0.4);
\node[outer sep=1,inner sep=2,circle,draw,thick] (1) at (0,0){$1$};
\node[outer sep=1,inner sep=2,circle,draw,thick] (2) at (1.5,0){$2$};
\draw[->,thick] (1.{180+20}) .. controls (-0.8,-0.7) and (-0.8,+0.7) .. (1.{180-20});
\draw[->,thick] (2.{0+20}) .. controls ({1.5+0.8},+0.7) and ({1.5+0.8},-0.7) .. (2.{0-20});
\path[->,thick]
(1) edge (2)
;
\end{tikzpicture}
\end{array}
\end{array}
\\~\\
%h4
\begin{array}{ccccccc}
\begin{array}{c|c}
x & h^4(x)\\\hline
00&11\\
01&10\\
10&00\\
11&01
\end{array}
&&
\begin{array}{c}
\begin{tikzpicture}
\useasboundingbox (-0.3,-0.6) rectangle (1.5,1.8);
\node (00) at (0,0){$00$};
\node (01) at (0,1.2){$01$};
\node (10) at (1.2,0){$10$};
\node (11) at (1.2,1.2){$11$};
\path[thick,->,draw,black]
(00) edge (11)
(01) edge (10)
(10) edge (00)
(11) edge (01)
;
\end{tikzpicture}
\end{array}
&&
\begin{array}{l}
h^4_1(x)=x_1+1\textcolor{white}{+x_2}\\[2mm]
h^4_2(x)=x_1+x_2+1\\
\end{array}
&&
\begin{array}{c}
G(h^4)\\[2mm]
\begin{tikzpicture}
\useasboundingbox (-0.7,-0.4) rectangle (2.2,0.4);
\node[outer sep=1,inner sep=2,circle,draw,thick] (1) at (0,0){$1$};
\node[outer sep=1,inner sep=2,circle,draw,thick] (2) at (1.5,0){$2$};
\draw[->,thick] (1.{180+20}) .. controls (-0.8,-0.7) and (-0.8,+0.7) .. (1.{180-20});
\draw[->,thick] (2.{0+20}) .. controls ({1.5+0.8},+0.7) and ({1.5+0.8},-0.7) .. (2.{0-20});
\path[->,thick]
(1) edge (2)
;
\end{tikzpicture}
\end{array}
\end{array}
\\~\\
%h5
\begin{array}{ccccccc}
\begin{array}{c|c}
x & h^5(x)\\\hline
00&01\\
01&10\\
10&11\\
11&00
\end{array}
&&
\begin{array}{c}
\begin{tikzpicture}
\useasboundingbox (-0.3,-0.6) rectangle (1.5,1.8);
\node (00) at (0,0){$00$};
\node (01) at (0,1.2){$01$};
\node (10) at (1.2,0){$10$};
\node (11) at (1.2,1.2){$11$};
\path[thick,->,draw,black]
(00) edge (01)
(01) edge (10)
(10) edge (11)
(11) edge (00)
;
\end{tikzpicture}
\end{array}
&&
\begin{array}{l}
h^5_1(x)=x_1+x_2\textcolor{white}{+1}\\[2mm]
h^5_2(x)=x_2+1\textcolor{white}{+x_1}\\
\end{array}
&&
\begin{array}{c}
G(h^5)\\[2mm]
\begin{tikzpicture}
\useasboundingbox (-0.7,-0.4) rectangle (2.2,0.4);
\node[outer sep=1,inner sep=2,circle,draw,thick] (1) at (0,0){$1$};
\node[outer sep=1,inner sep=2,circle,draw,thick] (2) at (1.5,0){$2$};
\draw[->,thick] (1.{180+20}) .. controls (-0.8,-0.7) and (-0.8,+0.7) .. (1.{180-20});
\draw[->,thick] (2.{0+20}) .. controls ({1.5+0.8},+0.7) and ({1.5+0.8},-0.7) .. (2.{0-20});
\path[->,thick]
(2) edge (1)
;
\end{tikzpicture}
\end{array}
\end{array}
\\~\\
%h6
\begin{array}{ccccccc}
\begin{array}{c|c}
x & h^6(x)\\\hline
00&11\\
01&00\\
10&01\\
11&10
\end{array}
&&
\begin{array}{c}
\begin{tikzpicture}
\useasboundingbox (-0.3,-0.6) rectangle (1.5,1.8);
\node (00) at (0,0){$00$};
\node (01) at (0,1.2){$01$};
\node (10) at (1.2,0){$10$};
\node (11) at (1.2,1.2){$11$};
\path[thick,->,draw,black]
(00) edge (11)
(01) edge (00)
(10) edge (01)
(11) edge (10)
;
\end{tikzpicture}
\end{array}
&&
\begin{array}{l}
h^6_1(x)=x_1+x_2+1\\[2mm]
h^6_2(x)=x_2+1\textcolor{white}{+x_1}\\
\end{array}
&&
\begin{array}{c}
G(h^6)\\[2mm]
\begin{tikzpicture}
\useasboundingbox (-0.7,-0.4) rectangle (2.2,0.4);
\node[outer sep=1,inner sep=2,circle,draw,thick] (1) at (0,0){$1$};
\node[outer sep=1,inner sep=2,circle,draw,thick] (2) at (1.5,0){$2$};
\draw[->,thick] (1.{180+20}) .. controls (-0.8,-0.7) and (-0.8,+0.7) .. (1.{180-20});
\draw[->,thick] (2.{0+20}) .. controls ({1.5+0.8},+0.7) and ({1.5+0.8},-0.7) .. (2.{0-20});
\path[->,thick]
(2) edge (1)
;
\end{tikzpicture}
\end{array}
\end{array}
\end{array}
\]

A corollary of Theorems~\ref{thm:main1} and~\ref{thm:main2}, stated in the introduction, is that, for $n\geq 5$, we have $|\G(f)|=1$ if and only if $f=\id$ or $f=\cst$. This is also true for $n=3$ (checked by computer) and open for $n=4$. But for $n=2$, $f^2$ is a counter example, the only one with two components. Indeed, there are $3$ networks isomorphic to $f^2$, denoted from $g^1$ to $g^3$ (with $g^1=f^2$), given below, and they have the same interaction graph, with a loop on each vertex, as for the identity. 
\[
\begin{array}{l}
%g1
\begin{array}{ccccccc}
\begin{array}{c|c}
x & g^1(x)\\\hline
00&10\\
01&11\\
10&00\\
11&01
\end{array}
&&
\begin{array}{c}
\begin{tikzpicture}
\useasboundingbox (-0.3,-0.6) rectangle (1.5,1.8);
\node (00) at (0,0){$00$};
\node (01) at (0,1.2){$01$};
\node (10) at (1.2,0){$10$};
\node (11) at (1.2,1.2){$11$};
\path[thick,->,draw,black]
(00) edge[bend left=10] (10)
(01) edge[bend left=10] (11)
(10) edge[bend left=10] (00)
(11) edge[bend left=10] (01)
;
\end{tikzpicture}
\end{array}
&&
\begin{array}{l}
g^1_1(x)=x_1+1\\[2mm]
g^1_2(x)=x_2\\
\end{array}
&&
\begin{array}{c}
G(g^1)\\[2mm]
\begin{tikzpicture}
\useasboundingbox (-0.7,-0.4) rectangle (2.2,0.4);
\node[outer sep=1,inner sep=2,circle,draw,thick] (1) at (0,0){$1$};
\node[outer sep=1,inner sep=2,circle,draw,thick] (2) at (1.5,0){$2$};
\draw[->,thick] (1.{180+20}) .. controls (-0.8,-0.7) and (-0.8,+0.7) .. (1.{180-20});
\draw[->,thick] (2.{0+20}) .. controls ({1.5+0.8},+0.7) and ({1.5+0.8},-0.7) .. (2.{0-20});
\end{tikzpicture}
\end{array}
\end{array}
\\~\\
%g2
\begin{array}{ccccccc}
\begin{array}{c|c}
x & g^2(x)\\\hline
00&01\\
01&00\\
10&11\\
11&10
\end{array}
&&
\begin{array}{c}
\begin{tikzpicture}
\useasboundingbox (-0.3,-0.6) rectangle (1.5,1.8);
\node (00) at (0,0){$00$};
\node (01) at (0,1.2){$01$};
\node (10) at (1.2,0){$10$};
\node (11) at (1.2,1.2){$11$};
\path[thick,->,draw,black]
(00) edge[bend left=12] (01)
(01) edge[bend left=12] (00)
(10) edge[bend left=12] (11)
(11) edge[bend left=12] (10)
;
\end{tikzpicture}
\end{array}
&&
\begin{array}{l}
g^2_1(x)=x_1\\[2mm]
g^2_2(x)=x_2+1\\
\end{array}
&&
\begin{array}{c}
G(g^2)\\[2mm]
\begin{tikzpicture}
\useasboundingbox (-0.7,-0.4) rectangle (2.2,0.4);
\node[outer sep=1,inner sep=2,circle,draw,thick] (1) at (0,0){$1$};
\node[outer sep=1,inner sep=2,circle,draw,thick] (2) at (1.5,0){$2$};
\draw[->,thick] (1.{180+20}) .. controls (-0.8,-0.7) and (-0.8,+0.7) .. (1.{180-20});
\draw[->,thick] (2.{0+20}) .. controls ({1.5+0.8},+0.7) and ({1.5+0.8},-0.7) .. (2.{0-20});
\end{tikzpicture}
\end{array}
\end{array}
\\~\\
%g3
\begin{array}{ccccccc}
\begin{array}{c|c}
x & g^3(x)\\\hline
00&11\\
01&10\\
10&01\\
11&00
\end{array}
&&
\begin{array}{c}
\begin{tikzpicture}
\useasboundingbox (-0.3,-0.6) rectangle (1.5,1.8);
\node (00) at (0,0){$00$};
\node (01) at (0,1.2){$01$};
\node (10) at (1.2,0){$10$};
\node (11) at (1.2,1.2){$11$};
\path[thick,->,draw,black]
(00) edge[bend left=10] (11)
(01) edge[bend left=10] (10)
(10) edge[bend left=10] (01)
(11) edge[bend left=10] (00)
;
\end{tikzpicture}
\end{array}
&&
\begin{array}{l}
g^3_1(x)=x_1+1\\[2mm]
g^3_2(x)=x_2+1\\
\end{array}
&&
\begin{array}{c}
G(g^3)\\[2mm]
\begin{tikzpicture}
\useasboundingbox (-0.7,-0.4) rectangle (2.2,0.4);
\node[outer sep=1,inner sep=2,circle,draw,thick] (1) at (0,0){$1$};
\node[outer sep=1,inner sep=2,circle,draw,thick] (2) at (1.5,0){$2$};
\draw[->,thick] (1.{180+20}) .. controls (-0.8,-0.7) and (-0.8,+0.7) .. (1.{180-20});
\draw[->,thick] (2.{0+20}) .. controls ({1.5+0.8},+0.7) and ({1.5+0.8},-0.7) .. (2.{0-20});
\end{tikzpicture}
\end{array}
\end{array}
\end{array}
\]

\section{Lemma for the proof of Theorem~\ref{thm:main1}}

\begin{lemma}\label{lem:X}
Let $1\leq k\leq n$ and let $s_1,\dots,s_{2k}$ be positive integers with sum equal to $n+k$. If $n\geq 5$ there are disjoint subsets $X_1,\dots,X_{2k}\subseteq \B^n$ of size $s_1,\dots,s_{2k}$ such that, for every $i\in [n]$, there is $p\in [k]$ and $x\in X_{2p-1}$ with $x+e_i\in X_{2p}$.  
\end{lemma}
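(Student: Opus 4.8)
The plan is to give an explicit construction: assign to each of the $k$ pairs $(X_{2p-1},X_{2p})$ a disjoint block of coordinate directions, realise each pair as a small tree inside the hypercube that covers exactly its block of directions, and finally slide these $k$ trees into pairwise disjoint positions by a coset-counting argument.

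First I would split the directions. Put $d_p=s_{2p-1}+s_{2p}-1\ge 1$; since $\sum_p d_p=(n+k)-k=n$, I can partition $[n]$ into blocks $D_1,\dots,D_k$ with $|D_p|=d_p$, and I relabel the pairs so that $d_1\ge\cdots\ge d_k$. Next, for each $p$ I build an abstract tree on $s_{2p-1}+s_{2p}$ vertices whose two colour classes have sizes exactly $s_{2p-1}$ and $s_{2p}$; a double star works (take an edge $uv$, hang $s_{2p-1}-1$ leaves on $v$ and $s_{2p}-1$ leaves on $u$). This tree has $d_p$ edges, which I put in bijection with the directions of $D_p$; rooting at $u$, sending $u$ to $\ZERO$ and each vertex $w$ to the sum of the $e_i$ over the directions met along the $u$--$w$ path embeds the tree injectively into the subcube spanned by $\{e_i:i\in D_p\}$ (injectivity because the directions along any path are distinct, so no nontrivial cycle can close up). Call $T_p$ the resulting vertex set, with colour classes $A_p,B_p$ of sizes $s_{2p-1},s_{2p}$. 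For any translate, setting $X_{2p-1}=c_p+A_p$ and $X_{2p}=c_p+B_p$ already meets the covering requirement on $D_p$: every direction $i\in D_p$ labels some tree edge, whose two endpoints lie one in $A_p$ and one in $B_p$, hence give $x\in X_{2p-1}$ with $x+e_i\in X_{2p}$. As the $D_p$ partition $[n]$, all $n$ directions are covered.

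The hard part is to choose the translations $c_1,\dots,c_k$ so that the sets $c_p+T_p$ are pairwise disjoint. I would place the pairs in the order $1,2,\dots,k$ (largest block first) and argue greedily using cosets of $V_p=\langle e_i:i\in D_p\rangle$. A placed set $c_q+T_q$ ($q<p$) meets at most $|T_q|=d_q+1$ distinct cosets of $V_p$, so at most $\sum_{q<p}(d_q+1)$ cosets are blocked; dropping $T_p$ into any unblocked coset makes it disjoint from all previously placed sets at once. Since $V_p$ has $2^{n-d_p}$ cosets, it suffices that $2^{n-d_p}>\sum_{q<p}(d_q+1)$. Writing $r=n-d_p$, the monotone ordering together with $\sum_p d_p=n$ give $\sum_{q<p}d_q\le r$ and $p-1\le r$, so the right-hand side is at most $2r$ (and is $0$ when $p=1$). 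For $p\ge 2$ the ordering forces $2d_p\le d_1+d_p\le n$, whence $d_p\le n/2$ and $r\ge n/2\ge 3$ --- this is exactly where $n\ge 5$ enters --- and since $2^r>2r$ for all $r\ge 3$ an unblocked coset always exists.

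The main obstacle is precisely this last step, and the reason it is delicate is that the construction has no slack: a set of $t$ configurations spans at most $t-1$ directions, so to cover all $n$ directions with a total of only $n+k$ configurations every pair must be maximally efficient and the blocks $D_p$ must be disjoint, which leaves positioning as the only freedom. The dangerous configuration is a pair whose block is almost all of $[n]$, since then $V_p$ has very few cosets; placing the largest trees first neutralises this, because whenever few cosets are available few pairs have yet been placed. I expect the remaining bookkeeping (the sizes, the exact orientation of each covered edge, and the degenerate cases $p=1$ and $k=1$) to be routine.
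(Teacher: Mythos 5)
Your proof is correct, and its first half is essentially the paper's own construction seen from a slightly different angle: the paper also partitions the directions (its $I_{2p-1}\cup I_{2p}$, with $|I_{2p-1}|=s_{2p-1}-1$ and $|I_{2p}|=s_{2p}$, is exactly your block $D_p$) and realises each pair as a double star embedded in the cube, with $X_{2p-1}$ consisting of a centre $x^{2p-1}$ and its grandchildren through a distinguished neighbour $x^{2p-1}+e_{j_{2p}}$, and $X_{2p}$ consisting of the neighbours $x^{2p-1}+e_i$, $i\in I_{2p}$. Where you genuinely diverge is the disjointness step, which is precisely what the paper calls the ``technical'' part. The paper places the gadgets explicitly: any $x^1$ for $k=1$; $x^1=\ZERO$, $x^3=\ONE$ for $k=2$; and the cyclic choice $x^{2p-1}=e_{j_{2p-2}}$ for $k\geq 3$, after which disjointness is checked by a weight analysis (all points have weight $1$, $2$ or $3$) and a short case discussion. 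You instead place the trees greedily, largest block first, inside cosets of $V_p=\langle e_i: i\in D_p\rangle$: at most $\sum_{q<p}(d_q+1)\leq 2(n-d_p)$ cosets are blocked, $2^{n-d_p}$ are available, and $2^r>2r$ holds for $r\geq 3$, which for $p\geq 2$ follows from $d_p\leq n/2$ and $n\geq 5$ (for $n=5$, $d_p\leq 2$ so $r\geq 3$); all the estimates check out, including $p-1\leq\sum_{q<p}d_q\leq n-d_p$. Your counting argument buys robustness and makes completely transparent where $n\geq 5$ enters, whereas in the paper that hypothesis is buried in the $k=2$ case and in the weight collisions; the paper's explicit placement, in exchange, yields concrete sets and avoids the sorting and greedy bookkeeping. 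The details you defer are indeed routine: in particular the orientation of a covered edge never matters, since if the endpoint in $X_{2p-1}$ is $y'=y+e_i$ rather than $y$, then $y'+e_i=y\in X_{2p}$, so the covering condition is symmetric in the two endpoints.
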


\begin{proof}
Let $I_1,\dots,I_{2k}$ be a partition of $[n]$ (with some members possibly empty) such that, for $1\leq \ell\leq 2k$, the size of $I_\ell$ is $s_\ell-1$ if $\ell$ is odd, and $s_\ell$ otherwise; it exists since the sum of the $s_\ell$ is $n+k$. For $1\leq p\leq k$, select a configuration $x^{2p-1}\in\B^n$, a component $j_{2p}\in I_{2p}$, and let 
\[
\begin{array}{lll}
X_{2p-1}&=&\{x^{2p-1}\}\cup\{x^{2p-1}+e_{j_{2p}}+e_i\mid i\in I_{2p-1}\},\\[1mm]
X_{2p}&=&\{x^{2p-1}+e_i\mid i\in I_{2p}\}. 
\end{array}
\]
Clearly, $|X_{2p-1}|=|I_{2p-1}|+1=s_{2p-1}$ and $|X_{2p}|=|I_{2p}|=s_{2p}$. Let $i\in [n]$. Then $i\in I_\ell$ for some $1\leq \ell\leq 2k$. If $\ell=2p-1$ then, setting $x=x^{2p-1}+e_{j_{2p}}+e_i$, we have $x\in X_{2p-1}$ and $x+e_i=x^{2p-1}+e_{j_{2p}}\in X_{2p}$. If $\ell=2p$ then, setting $x=x^{2p-1}$, we have $x\in X_{2p-1}$ and $x+e_i\in X_{2p}$. Thus we only have to prove that we can choose the configurations $x^{2p-1}$ so that the sets $X_1,\dots,X_{2k}$ are pairwise disjoint. This is obvious if $k=1$ (and it works for any choice of $x^1$). If $k=2$, then, since $n\geq 5$, one easily check that $X_1,\dots,X_{2k}$ are disjoint by taking $x^1=\ZERO$ and $x^3=\ONE$. Suppose now that $k\geq 3$ and choose $x^{2p-1}=e_{j_{2p-2}}$ for all $p\in[k]$, where $j_0$ means~$j_{2k}$. Then each $X_{2p-1}$ contains configurations of weight $1$ or $3$ and each $X_{2p}$ contains configurations of weight $2$. Hence, given $1\leq p<q\leq k$, we have to prove that $X_{2p-1}\cap X_{2q-1}=\emptyset$ and $X_{2p}\cap X_{2q}=\emptyset$. Suppose that $x\in X_{2p-1}\cap X_{2q-1}$. If $w(x)=1$ then we deduce that $x=e_{j_{2p-2}}=e_{j_{2q-2}}$ which is false since $p\neq q$. If $w(x)=3$ then $x_i=1$ for some $i\in I_{2p-1}$ while $y_i=0$ for all $y\in X_{2q-1}$, a contradiction. Thus $X_{2p-1}\cap X_{2q-1}=\emptyset$. Suppose now that $x\in X_{2p}\cap X_{2q}$. Then $x=e_{j_{2p-2}}+e_{i_{2p}}=e_{j_{2q-2}}+e_{i_{2q}}$ for some $i_{2p}\in I_{2p}$ and $i_{2q}\in I_{2q}$. Thus $i_{2p}=j_{2q-2}$ and $i_{2q}=j_{2p-2}$. Hence $j_{2q-2}\in I_{2p}$, and since $p<q$ this implies $q=p+1$. Also, $j_{2p-1}\in I_{2q}$ and since $p<q$ this implies $p=1$ and $q=k$, but then $q\neq p+1$ since $k\geq 3$. Thus indeed $X_{2p}\cap X_{2q}=\emptyset$. Hence the sets $X_1,\dots,X_{2k}$ are indeed pairwise disjoint.
\end{proof}

\section{Lemma for the proof of Theorem~\ref{thm:main2}}

We prove Lemma~\ref{lem:nice}, restated below. We first give some definitions. Given $X\subseteq \B^n$ and $i\in [n]$, we say that $X$ is {\em closed} by $e_i$ if $X=X+e_i$. One easily check that if $X,Y\subseteq\B^n$ are closed by $e_i$ so is $X\cap Y$. Also, if $|X|$ is closed by $e_i$ then $|X|$ is even. Indeed, let $X_0$ be the set of of $x\in X$ with $x_i=0$ and $X_1=X\setminus X_0$. We have $X_0+e_i\subseteq X_1$, thus $|X_0|\leq |X_1|$ and similarly, $X_1+e_i\subseteq X_0$ thus $|X_1|\leq |X_0|$. Hence $|X_0|=|X_1|$ so $|X|$ is even. 

\setcounter{lemma}{3}
\begin{lemma}
Let $f$ be a $n$-component network and distinct $i,j\in [n]$. Some digraph in $\G(f)$ has no arc from $j$ to $i$ if and only if $f$ has a $(2^{n-2})$-nice~set. 
\end{lemma}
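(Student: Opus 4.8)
The plan is to translate the condition ``some digraph in $\G(f)$ has no arc from $j$ to $i$'' into the purely combinatorial existence of a nice set. First I would rewrite the arc condition: $G(h)$ has no arc from $j$ to $i$ precisely when $h_i$ does not depend on coordinate $j$, that is, when the set $S=\{z\in\B^n\mid h_i(z)=1\}$ satisfies $S+e_j=S$, i.e.\ $S$ is closed by $e_j$. Writing $h=\pi\circ f\circ\pi^{-1}$ and setting $H=\{z\in\B^n\mid z_i=1\}$ and $A=\pi^{-1}(H)$ (a set of size $2^{n-1}$), a direct computation gives $S=\pi(f^{-1}(A))$. Hence the existence of an isomorphic $h$ with no arc from $j$ to $i$ is equivalent to the existence of a set $A$ of size $2^{n-1}$ together with a permutation $\pi$ satisfying $\pi(A)=H$ such that $\pi(f^{-1}(A))$ is closed by $e_j$.

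Next I would exploit that $i\neq j$, so $H$ itself is closed by $e_j$ and $e_j$ preserves each of the two halves $H$ and $\B^n\setminus H$. Decomposing $\pi(f^{-1}(A))$ according to these halves, its part inside $H$ is $\pi(f^{-1}(A)\cap A)$ and its part inside $\B^n\setminus H$ is $\pi(f^{-1}(A)\setminus A)$; since $e_j$ stabilises the halves, $\pi(f^{-1}(A))$ is closed by $e_j$ if and only if each of these two parts is. As $\pi$ restricts to independent bijections $A\to H$ and $(\B^n\setminus A)\to(\B^n\setminus H)$, for a fixed $A$ the problem decouples into two identical subproblems, one on each half.

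The heart of the argument is then the following elementary fact: for a fixed $A$ of size $2^{n-1}$, a bijection $A\to H$ sending $f^{-1}(A)\cap A$ onto a set closed by $e_j$ exists if and only if $|f^{-1}(A)\cap A|$ is even. The ``only if'' direction uses that any set closed by $e_j$ has even size (noted just before the statement); for the ``if'' direction, any even-sized subset of $A$ can be mapped onto a union of $e_j$-pairs inside $H$, the remaining elements being routed arbitrarily. Applying this to both halves, a suitable $\pi$ exists for a given $A$ exactly when $|f^{-1}(A)\cap A|$ and $|f^{-1}(A)\setminus A|$ are both even; since $|A|=2^{n-1}$ is even, this is equivalent to $|f^{-1}(A)|$ and $|f^{-1}(A)\cap A|$ being both even, i.e.\ to $A$ being a $(2^{n-2})$-nice set (such a set has size $2\cdot 2^{n-2}=2^{n-1}$, matching $|A|$). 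Quantifying over $A$ then yields the claimed equivalence. I expect the parity bookkeeping in this last step to be the main obstacle: one must check carefully that the two nice-set conditions correspond exactly to evenness on the two halves, and that the two halves can indeed be routed independently without a global parity obstruction.
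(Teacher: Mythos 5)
Your proposal is correct and follows essentially the same route as the paper: both directions rest on the observation that ``no arc from $j$ to $i$'' means the $h$-preimage of a half-cube $\{x\mid x_i=\text{const}\}$ is closed by $e_j$, that $e_j$-closed sets have even size, and conversely that even-sized sets can be mapped onto unions of $e_j$-pairs inside each half determined by coordinate $i$. The only difference is presentational: you package the converse as two decoupled bijection subproblems (one per half), whereas the paper carries out the same pairing explicitly via balanced partitions $(A_1,A_2)$, $(A_3,A_4)$, $(A_3^-,A_4^-)$ and target sets $X_1$, $X_1+e_j$, etc.
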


\begin{proof}
Let $h=\pi\circ f\circ\pi^{-1}$ for some permutation $\pi$ of $\B^n$, and suppose that $G(h)$ has no arc from $j$ to $i$. Let $X$ be the set of $x\in\B^n$ with $x_i=0$, of size $2^{n-1}$, and let $X^-=h^{-1}(X)$. If $x\in X^-$, that is, $h_i(x)=0$, then $h_i(x+e_j)=0$ since otherwise $G(h)$ has an arc from $j$ to $i$. Thus $x+e_j\in X^-$. Hence $X^-$ is closed by $e_j$ thus $|X^-|$ is even. Since $j\neq i$, $X$ is also closed by~$e_j$, so $X^-\cap X$ is closed by $e_j$ and thus $|X^-\cap X|$ is even. Let $A=\pi^{-1}(X)$. Then one easily check that $f^{-1}(A)=\pi^{-1}(X^-)$ and that $f^{-1}(A)\cap A=\pi^{-1}(X^-\cap X)$. So $|f^{-1}(A)|$ and $|f^{-1}(A)\cap A|$ are even, and thus $A$ is a $(2^{n-2})$-nice set of $f$. 

Conversely, suppose that $f$ has a $(2^{n-2})$-nice set $A$, thus of size $2^{n-1}$. Let $A^-=f^{-1}(A)$. Then $|A|$, $|A^-|$ and $|A^-\cap A|$ are even, so $|A\setminus A^-|$ and $|A^-\setminus A|$ are also even. Hence there is a a balanced partition $(A_1,A_2)$ of $A\cap A^-$, a balanced partition $(A_3,A_4)$ of $A\setminus A^-$, and a balanced partition $(A^-_3,A^-_4)$ of $A^-\setminus A$. Then $(A_1,A_2,A_3,A_4)$ is a partition of $A$ and $(A_1,A_2,A^-_3,A^-_4)$ is a partition of $A^-$, and $A_1,A_2,A_3,A_4,A^-_3,A^-_4$ are disjoint. 

For $k=0,1$, let $Y_{k}$ be the set of $x\in \B^n$ with $x_i=k$ and $x_j=0$. Since $|A|=2^{n-1}$ we have $|A_1|+|A_3|=2^{n-2}$ thus there is a partition $(X_1,X_3)$ of $Y_0$ with $|X_1|=|A_1|$ and $|X_3|=|A_3|$. Since $|A^-_3|=|A^-\setminus A|/2\leq (2^n-|A|)/2=2^{n-2}$ there is $X^-_3\subseteq Y_1$ of size $|A^-_3|$. Let $X_2=X_1+e_j$, $X_4=X_3+e_j$ and $X^-_4=X^-_3+e_j$. Then $X_1,X_2,X_3,X_4,X^-_3,X^-_4$ are disjoint and of size $|A_1|,|A_2|,|A_3|,|A_4|,|A^-_3|,|A^-_4|$. Hence there is a permutation $\pi$ of $\B^n$ that sends $A_1,A_2,A_3,A_4,A^-_3,A^-_4$ on $X_1,X_2,X_3,X_4,X^-_3,X^-_4$, respectively. 

Let $h=\pi\circ f\circ\pi^{-1}$. Let $X=X_1\cup X_2\cup X_3\cup X_4$ and $X^-=X_1\cup X_2\cup X^-_3\cup X^-_4$. One easily check that: $X$ is the set of $x\in\B^n$ with $x_i=0$; $X^-$ is closed by $e_j$; and $X^-=h^{-1}(X)$. We will prove that $G(h)$ has no arc from $j$ to $i$. Suppose, for a contradiction, that there is $x\in\B^n$ with $h_i(x)\neq h_i(x+e_j)$. Without loss, we can assume that $h_i(x)=0$, that is, $h(x)\in X$. So $x\in X^-$, hence $x+e_j\in X^-$, thus $h(x+e_j)\in X$, that is, $h_i(x+e_j)=0$, a contradiction. Thus $G(h)$ has no arc from $j$ to $i$.
\end{proof}

\section{Lemma for the proof of Theorem~\ref{thm:main3}}

For $X\subseteq \B^n$, let $\partial(X)$ be the number of edges in $Q_n$ with exactly one end in $X$; equivalently, it is the number couples $(x,i)$ with $x\in X$ and $i\in [n]$ such that $x+e_i\not\in X$. We can regard $|X|$ as the volume of $X$ and $\delta(X)$ as its perimeter and ask: what is the minimum perimeter for a given volume? The answer is Harp's isoperimetric inequality, from 1964. We denote by $L_k$ the first $k$ configurations in $\B^n$ according to the lexicographic order (e.g. for $n=4$ we have $L_5=\{0000,1000,0100,1100,0010\}$). 

\begin{theorem}[Harper's isoperimetric inequality, 1964]
If $n\geq 1$ and~$X\subseteq \B^n$ then $\partial (X)\geq \partial(L_{|X|})$.  
\end{theorem}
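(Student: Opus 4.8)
The plan is to prove this edge-isoperimetric inequality by induction on $n$, via the standard recursive splitting of the cube. First note that $L_k$ is exactly the initial segment of length $k$ of the \emph{binary order} on $\B^n$, in which $x$ precedes $y$ iff $\sum_{i}x_i2^{i-1}<\sum_i y_i2^{i-1}$ (so $x_1$ is the least and $x_n$ the most significant bit, matching $L_5=\{0000,1000,0100,1100,0010\}$). For $X\subseteq\B^n$ I split along the most significant coordinate: set $X_0=\{y\in\B^{n-1}\mid (y,0)\in X\}$ and $X_1=\{y\in\B^{n-1}\mid (y,1)\in X\}$. The edges of $Q_n$ fall into those inside the face $\{x_n=0\}$, those inside $\{x_n=1\}$, and the $2^{n-1}$ edges in direction $n$; a direction-$n$ edge lies in $\partial(X)$ exactly when its base $y$ satisfies $y\in X_0\triangle X_1$. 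Summing contributions gives the identity
\[
\partial(X)=\partial(X_0)+\partial(X_1)+|X_0\triangle X_1|,
\]
where the two right-hand boundaries are computed in $Q_{n-1}$.

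The induction then proceeds as follows; the base case $n=1$ is a direct check. Writing $a=|X_0|$ and $b=|X_1|$ (so $a,b\leq 2^{n-1}$ automatically), the inductive hypothesis gives $\partial(X_0)\geq\partial(L_a)$ and $\partial(X_1)\geq\partial(L_b)$ in $Q_{n-1}$, and bounding the cross term by $|X_0\triangle X_1|\geq|a-b|$ yields $\partial(X)\geq g_{n-1}(a)+g_{n-1}(b)+|a-b|$, where I write $g_k(m)=\partial(L_m)$ for the conjectured optimum in $Q_k$. It therefore suffices to prove the purely numerical statement
\[
g_{n-1}(a)+g_{n-1}(b)+|a-b|\ \geq\ g_n(a+b)\qquad(0\leq a,b\leq 2^{n-1}),
\]
since this gives $\partial(X)\geq g_n(|X|)=\partial(L_{|X|})$, as wanted.

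To control $g_n$ I would use that initial segments split cleanly on coordinate $n$: for $m\leq 2^{n-1}$ the segment sits in $\{x_n=0\}$ and every vertex sends one edge upward, so $g_n(m)=g_{n-1}(m)+m$; for $2^{n-1}<m\leq 2^n$ the lower face is full, so $g_n(m)=g_{n-1}(m-2^{n-1})+(2^n-m)$. Equivalently, passing to internal edges through $\partial(X)=n|X|-2e(X)$, where $e(X)$ counts edges of $Q_n$ with both ends in $X$, the optimum has the dimension-independent closed form $e(L_m)=\sum_{v=0}^{m-1}s_2(v)$, with $s_2(v)$ the number of ones in the binary expansion of $v$ (the vertex of binary value $v$ joins exactly $s_2(v)$ earlier vertices). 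In this language, and taking $a\geq b$ without loss, the numerical statement becomes $e(L_{a+b})-e(L_a)\geq b+e(L_b)$, that is
\[
\sum_{v=a}^{a+b-1}s_2(v)\ \geq\ b+\sum_{u=0}^{b-1}s_2(u).
\]
This is where I expect the real work to be: the naive termwise bound $s_2(a+u)\geq s_2(u)+1$ fails because binary carries can lower the digit sum, so the inequality must be established in aggregate. I would do this by a secondary induction split on whether $a+b\leq 2^{n-1}$ (matching the recursion for $g_n$), noting that equality holds exactly at the nested split $X_1\subseteq X_0$ realised by the initial segment itself.

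Finally, a remark on why the tempting shortcut of compression is not enough. Down-compressing $X$ in each coordinate does not increase $\partial$ and produces a down-set, but down-sets are not automatically edge-optimal: in $Q_3$ the Hamming ball $\{000,100,010,001\}$ is a down-set with $\partial=6$, whereas the $2$-face $L_4=\{000,100,010,110\}$ of the same size has $\partial=4$. Thus compression alone cannot identify the extremal sets, and the inductive argument together with the carry-sensitive numerical lemma above is genuinely required.
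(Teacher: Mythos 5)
The first thing to note is that the paper does not prove this statement at all: it is cited as Harper's classical 1964 result and used as a black box (only the derived bound $\partial(X)\geq |X|(n-\log_2|X|)$ of Corollary~\ref{cor:harper} is actually used, to get Lemma~\ref{lem:cube}). So you are supplying a proof where the paper supplies a reference, and the only question is whether your proof is complete. Its skeleton is the standard one and is correct as far as it goes: the decomposition $\partial(X)=\partial(X_0)+\partial(X_1)+|X_0\triangle X_1|$ is right; the recursions $g_n(m)=g_{n-1}(m)+m$ for $m\leq 2^{n-1}$ and $g_n(m)=g_{n-1}(m-2^{n-1})+(2^n-m)$ are right; $e(L_m)=\sum_{v<m}s_2(v)$ is right; and your reduction of the induction step to the inequality $\sum_{v=a}^{a+b-1}s_2(v)\geq b+\sum_{u<b}s_2(u)$ for $b\leq a$ is an accurate computation (I checked it via $\partial(X)=n|X|-2e(X)$). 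That inequality is indeed true: writing $f(m)=\sum_{v<m}s_2(v)$, it is exactly Hart's lemma $f(a+b)\geq f(a)+f(b)+\min(a,b)$, the engine of Hart's 1976 proof of Harper's theorem.

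The genuine gap is that you do not prove this lemma, and it is the entire content of the theorem: everything you do establish is routine bookkeeping, while all of the difficulty --- the binary carries that, as you correctly observe, defeat any termwise bound $s_2(a+u)\geq s_2(u)+1$ --- is concentrated in the deferred step. Moreover, the induction scheme you hint at (``split on whether $a+b\leq 2^{n-1}$'') does not descend. In its $f$-form the inequality is dimension-free, so the complement symmetry $g_n(m)=g_n(2^n-m)$ disposes of the case $a+b>2^{n-1}$ by mapping $(a,b)$ to $(2^{n-1}-a,\,2^{n-1}-b)$, but in the remaining case $a+b\leq 2^{n-1}$ you face the identical statement with no parameter reduced: the instances with $b\leq 2^{n-1}-a<a$ are handled neither by the level-$(n-1)$ hypothesis nor by symmetry, and these are precisely the carry-sensitive ones. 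A working descent inducts instead on $a+b$, splitting on the parities of $a$ and $b$ and using the identities $f(2t)=2f(t)+t$ and $f(2t+1)=2f(t)+t+s_2(t)$, with some care in the mixed-parity cases. So: right strategy, correct reduction, but the one lemma you leave open \emph{is} the theorem, and the sketch you give for it would stall. (Your closing remark about compression is correct and apt: your $Q_3$ example, where the Hamming ball of size $4$ has $\partial=6$ while $L_4$ has $\partial=4$, shows down-sets need not be optimal, so compressions alone cannot finish either.)
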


In practice, the following approximation is often used; see \cite{KP20} for instance. 

\begin{corollary}\label{cor:harper}
If $n\geq 1$ and $X\subseteq \B^n$ is non-empty, then
\[
\partial (X)\geq |X|(n-\log_2|X|).
\]  
\end{corollary}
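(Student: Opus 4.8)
The plan is to invoke the exact statement $\partial(X)\ge\partial(L_{|X|})$ and reduce everything to a single lower bound on the initial segments, namely $\partial(L_k)\ge k(n-\log_2 k)$ for $1\le k\le 2^n$; applying it with $k=|X|\ge 1$ then gives the Corollary. The guiding case is $k=2^d$: here $L_{2^d}$ is exactly the $d$-dimensional subcube obtained by fixing the $n-d$ most significant coordinates to $0$, each of its $2^d$ vertices has precisely $n-d$ incident boundary edges, so $\partial(L_{2^d})=2^d(n-d)=k(n-\log_2 k)$ with equality. I would prove the general bound by induction on $n$ (the case $n=1$ being immediate), splitting $\B^n$ along the most significant coordinate into two facets $F_0,F_1\cong\B^{n-1}$ and writing $\partial_{n-1}$ for the edge boundary inside $Q_{n-1}$.

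For the easy regime $1\le k\le 2^{n-1}$ the segment $L_k$ lies entirely in $F_0$, where it coincides with the initial segment $L_k$ of $Q_{n-1}$. Counting boundary edges by direction gives $\partial(L_k)=\partial_{n-1}(L_k)+k$, since every vertex of $L_k$ contributes exactly one crossing edge in the top direction and $F_1$ contributes nothing. The induction hypothesis $\partial_{n-1}(L_k)\ge k\big((n-1)-\log_2 k\big)$ then yields $\partial(L_k)\ge k(n-\log_2 k)$ directly, with no analytic input.

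The regime $2^{n-1}<k\le 2^n$ is where the work lies. First I would record the symmetry $\partial(L_k)=\partial(L_{2^n-k})$: an edge cut is unchanged under complementation, so $\partial(L_k)=\partial(\B^n\setminus L_k)\ge\partial(L_{2^n-k})$ by Harper's inequality, and applying Harper again to the complement of $L_{2^n-k}$ gives the reverse inequality. Since $2^n-k<2^{n-1}$, the easy regime bounds $\partial(L_{2^n-k})\ge(2^n-k)\big(n-\log_2(2^n-k)\big)$, so it remains to establish the purely real-variable inequality
\[
(2^n-k)\big(n-\log_2(2^n-k)\big)\ \ge\ k\big(n-\log_2 k\big)\qquad(2^{n-1}\le k\le 2^n).
\]
Writing $\psi(t)=t(n-\log_2 t)=t\log_2(2^n/t)$ and $\phi(k)=\psi(k)-\psi(2^n-k)$, one checks $\phi(2^{n-1})=\phi(2^n)=0$ and $\phi'(k)=\log_2\frac{2^{2n}}{k(2^n-k)}-\frac{2}{\ln 2}$, which increases from the negative value $2-\tfrac{2}{\ln2}$ at $k=2^{n-1}$ to $+\infty$ as $k\to 2^n$; hence $\phi$ is unimodal with one interior minimum and stays $\le 0$ between its two zeros, which is the claim. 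This concavity/unimodality estimate — equivalently the bound $-q\log_2 q\ge-(1-q)\log_2(1-q)$ for $q\in[0,\tfrac12]$, coming from the shape of the binary entropy function — is the only genuinely analytic ingredient and is the main (routine but fiddly) obstacle. I note that one can instead avoid the case split altogether by inducting directly on an arbitrary $X$: splitting on the top coordinate gives $\partial(X)=\partial_{n-1}(X_0)+\partial_{n-1}(X_1)+|X_0\triangle X_1|$, and the induction closes iff $H(p)\ge 2p$ on $[0,\tfrac12]$ for the binary entropy $H$ — the same entropy-concavity fact — so both routes bottleneck at exactly this point.
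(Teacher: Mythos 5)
Your proposal is correct, but there is nothing in the paper to compare it against: the paper does not prove this corollary at all --- it is stated right after Harper's theorem as a standard approximation, with a pointer to the literature ([KP20]). Your main route supplies exactly the missing deduction, and it is sound. For $k\le 2^{n-1}$ the identity $\partial(L_k)=\partial_{n-1}(L_k)+k$ closes the induction with no analysis; for $k>2^{n-1}$ your chain (cut symmetry, Harper applied to the complement, then the scalar inequality $-q\log_2 q\ge -(1-q)\log_2(1-q)$ on $[0,\frac12]$) is what is needed, and I checked the calculus: $\phi(2^{n-1})=\phi(2^n)=0$, $\phi'(2^{n-1})=2-\frac{2}{\ln 2}<0$, and $\phi''(k)=\frac{1}{\ln 2}\bigl(\frac{1}{2^n-k}-\frac{1}{k}\bigr)>0$ on $(2^{n-1},2^n)$, so $\phi$ is indeed decreasing-then-increasing and stays $\le 0$ between its two zeros. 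Two remarks. First, the symmetry $\partial(L_k)=\partial(L_{2^n-k})$ needs no appeal to Harper: the map $x\mapsto x+\ONE$ is an automorphism of $Q_n$ that reverses the binary order, so it carries $\B^n\setminus L_k$ onto $L_{2^n-k}$ exactly. Second, your closing alternative deserves to be the proof rather than a remark: inducting directly on an arbitrary $X$ via $\partial(X)=\partial_{n-1}(X_0)+\partial_{n-1}(X_1)+|X_0\triangle X_1|$, bounding $|X_0\triangle X_1|\ge\bigl||X_0|-|X_1|\bigr|$, and using $H(p)\ge 2p$ on $[0,\frac12]$ --- immediate since the binary entropy $H$ is concave and $2p$ is its chord from $(0,0)$ to $(\frac12,1)$ --- proves the corollary for every $X$ with no case split and without Harper's theorem as a black box. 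That self-contained route is more elementary than what either you (in your main argument) or the paper (by citation) rely on, since it removes the only external ingredient.
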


From this inequality, we deduce Lemma~\ref{lem:cube}, that we restate. 

\setcounter{lemma}{6}
\begin{lemma}
Let $A\subseteq \B^n$ be non-empty, and let $d$ be the average degree of the subgraph of $Q_n$ induced by $A$. Then $|A|\geq 2^d$.
\end{lemma}

\begin{proof}
For $a\in A$, let $d(a)$ be the degree of $a$ in the subgraph of $Q_n$ induced by $A$, and let $d$ be the average degree, that is, $d=\sum_{a\in A} d(a)/|A|$. Since, for each $a\in A$, there are exactly $n-d(a)$ components $i\in [n]$ such that $a+e_i\not\in A$, we have, using Corollary \ref{cor:harper}, 
\[
\partial (A)=\sum_{a\in A}n-d(a)=|A|n-|A|d=|A|(n-d)\geq |A|(n-\log_2|A|),
\]
thus $d\leq \log_2|A|$ as desired. 
\end{proof}

%%%%%%%%%%%%%%%%%%%%%%%%%%%%%%%%%%%
\end{document}